\tikzstyle{BasicNode}=[fill=none, draw=black, shape=circle]
\tikzstyle{Black Node}=[fill=black, draw=black, shape=circle, tikzit fill=black]
\tikzstyle{Red Node}=[fill=red, draw=black, shape=circle, tikzit fill=red]
\tikzstyle{Blue Node}=[fill=blue, draw=black, shape=circle, tikzit fill=blue]
\tikzstyle{Orange Node}=[fill=orange, draw=black, shape=circle, tikzit fill=orange]
\tikzstyle{Yellow Node}=[fill=yellow, draw=black, shape=circle, tikzit fill=yellow]
\tikzstyle{Green Node}=[fill=green, draw=black, shape=circle, tikzit fill=green]
\tikzstyle{L Blue Node}=[fill=cyan, draw=black, shape=circle, tikzit fill=cyan]
\tikzstyle{Purple Node}=[fill={rgb,255: red,128; green,0; blue,128}, draw=black, shape=circle, tikzit fill={rgb,255: red,128; green,0; blue,128}]
\tikzstyle{Arrow}=[->]
\tikzstyle{Blue Arrow}=[draw=blue, ->, tikzit draw=blue]
\tikzstyle{Big Blue Line}=[-, ultra thick, draw=blue]
\tikzstyle{Big Blue Arrow}=[draw=blue, ->, ultra thick]
\tikzstyle{red edge}=[-, draw=red, tikzit draw=red]
\tikzstyle{Orange}=[-, fill=none, draw={rgb,255: red,255; green,128; blue,0}, tikzit draw={rgb,255: red,255; green,128; blue,0}]
\tikzstyle{Yellow}=[-, draw=yellow, tikzit draw=yellow]
\tikzstyle{Green}=[-, draw=green]
\tikzstyle{L Blue}=[-, draw=cyan]
\tikzstyle{Blue}=[-, draw=blue, tikzit draw=blue]
\tikzstyle{Purple}=[-, draw={rgb,255: red,128; green,0; blue,128}, tikzit draw={rgb,255: red,128; green,0; blue,128}]
\tikzstyle{Big Red arrow}=[draw=red, ->, ultra thick]
\tikzstyle{Green arrow}=[draw=green, ->]
\tikzstyle{Purple arrow}=[draw={rgb,255: red,128; green,0; blue,128}, ->]
\tikzstyle{Thick Line}=[-, ultra thick]
\tikzstyle{Dashed blue}=[-, draw=blue, dashed]
\tikzstyle{Big red line}=[-, draw=red, ultra thick]
\tikzstyle{Red arrow}=[draw=red, tikzit draw=red, ->]
\tikzstyle{red dashed}=[-, draw=red, dashed]
\definecolor{codegreen}{rgb}{0,0.6,0}
\definecolor{codegray}{rgb}{0.5,0.5,0.5}
\definecolor{codepurple}{rgb}{0.58,0,0.82}
\definecolor{backcolour}{rgb}{0.95,0.95,0.92}
\lstdefinestyle{mystyle}{
    backgroundcolor=\color{backcolour},   
    commentstyle=\color{codegreen},
    keywordstyle=\color{magenta},
    numberstyle=\tiny\color{codegray},
    stringstyle=\color{codepurple},
    basicstyle=\ttfamily\footnotesize,
    breakatwhitespace=false,         
    breaklines=true,                 
    captionpos=b,                    
    keepspaces=true,                 
    numbers=left,                    
    numbersep=5pt,                  
    showspaces=false,                
    showstringspaces=false,
    showtabs=false,                  
    tabsize=2
}
\newcommand{\F}{\mathcal{F}}
\newcommand{\Q}{\hat Q}
\newtheorem{thm}{Theorem}[section]
\newtheorem{prop}[thm]{Proposition}
\newtheorem{cor}[thm]{Corollary}
\newtheorem{lem}[thm]{Lemma}
\theoremstyle{definition}
\newtheorem{defn}[thm]{Definition}
\newtheorem{obs}[thm]{Observation}
\newtheorem{conj}[thm]{Conjecture}
\newtheorem{ques}[thm]{Question}
\title{The Zero Forcing Number of Twisted Hypercubes}
\author{Collier, Peter \\\texttt{peter.collier@dal.ca}
\and
Janssen, Jeannette \\\texttt{jeannette.janssen@dal.ca}}
\date{\today}
\begin{document}

\maketitle

\begin{abstract}
    Twisted hypercubes are graphs that generalize the structure of the hypercube by relaxing the symmetry constraint while maintaining degree-regularity and connectivity. We study the zero forcing number of twisted hypercubes. Zero forcing is a graph infection process in which a particular colour change rule is iteratively applied to the graph and an initial set of vertices. We use the alternative framing of forcing arc sets to construct a family of twisted hypercubes of dimension k$\geq 3$ with zero forcing sets of size $2^{k-1}-2^{k-3}+1$, which is below the minimum zero forcing number of the hypercube.
\end{abstract}

\section{Introduction}

\label{intro}

In the realm of combinatorics and computer science, the hypercube stands out as a compelling and versatile structure. Hypercubes extend the geometric notion of a cube into higher dimensions, providing a robust framework for exploring complex problems in multidimensional spaces. These structures offer significant insights across various fields, including transport network design \cite{qbe_traffic}, quantum optimization \cite{quant_qbe}, and data organization \cite{data_qbe}.

The twisted hypercube generalizes the standard hypercube structure by relaxing the symmetry constraint while maintaining the degree-regularity and connectivity of the graphs. With this additional level of freedom, we seek to determine whether the underlying structure of the network can be better tailored to a specific  process in order to optimize its performance. We will look at one particular graph infection process called zero forcing.

Zero forcing, introduced in \cite{minrank}, is a graph infection process where every vertex of a graph is coloured either white or blue. An initial set of blue vertices will propagate according to a colour change rule, defined in Section \ref{prelims}, until no further changes are possible. If all vertices in the graph eventually turn blue, then the initial blue set is called a zero forcing set. The goal of studying zero forcing is to find the smallest possible zero forcing set, whose size is called the zero forcing number of the graph. 

Zero forcing was initially introduced in \cite{minrank} as an upper bound for the maximum nullity of matrices with certain patterns of non-zero entries. In  \cite{minrank}, the zero forcing number of various families of graphs  was determined, such as supertriangles, hypercubes, the line graphs as well as graph products of some graph families.
Zero forcing has also found applications in discrete mathematics (see inverse eigenvalue problem \cite{zfiep}), applied mathematics (see PMU placement problem \cite{zfpmu}), and control theory (see quantum control problem \cite{zfqcp}). For more on zero forcing and its applications, see \cite{hogben2022inverse}. The zero forcing number of graphs is an interesting field of research in its own right. Recently, the zero forcing number for families of graphs, such as Generalized Johnson, Generalized Grassmann, and Hamming graphs \cite{johngrassham} were found. Additionally, characterizations of graphs with certain degree-related zero forcing numbers were given in \cite{LIANG202381}. The zero forcing number of random graphs and random regular graphs is studied in \cite{Kalinowski19} and \cite{bal2018zeroforcingnumberrandom}, respectively. In \cite{He2024} a bound for the zero forcing number of claw-free cubic graphs is given, answering a problem posed in \cite{Davila2020}.

A common theme in zero forcing research is finding minimal zero forcing sets by using the structure of the graphs, while determining minimality using the maximum nullity of the graph. Both finding zero forcing sets and determining maximum nullity are known to be NP-Hard problems \cite{np-hard, dir_np-hard}. 
There are some combinatorial lower bounds on the zero forcing number, such as the minimum degree or the clique cover number. However, these are trivial in almost all graphs and can be arbitrarily far from the actual zero forcing number.

Recently, a new method of determining whether an initial set is a zero forcing set was developed in \cite{chaintwist}, which makes constructing and verifying such sets much simpler. In this paper, we use this method to find an upper bound on the zero forcing number of a family of twisted hypercubes.

\subsection{Preliminaries}
\label{prelims}

Given a graph, $G=(V,E)$, where each vertex is coloured either white or blue, and an initial set of blue vertices, $S \subseteq V$, we define zero forcing as the graph infection process in which we iteratively apply the following colour change rule:\\
{\it If a blue vertex, $u$, has exactly one white neighbour, $v$, then $v$ changes from white to blue.}

In this case, we say that \emph{$u$ forces $v$}, and denote this relation by $u\rightarrow v$. The derived set is the set of blue vertices after performing all possible forces. If the derived set is the entire vertex set, then we call $S$ a zero forcing set. The size of the smallest zero forcing set is called the zero forcing number of $G$, and is denoted $Z(G)$. 

It was noted in \cite{zfpar} that an observation of the zero forcing process with initial blue set $S$ can be described by a set of directed edges, or {\it  arcs}. 

\begin{defn}
\label{arc_set}
    For a graph $G=(V,E)$, we will call a set $\F\subset V\times V$ an {\it arc set of $G$} provided that for any ordered pair $(u,v)\in\F$, the edge $uv\in E$ and $(v,u)\notin\F$.
\end{defn}

For a given zero forcing process, we can represent the relation $u\rightarrow v$ as an arc set. These arcs form directed paths, or {\it chains}, $v_1v_2\dots v_k$, such that $v_1\in S$, and $(v_i,v_{i+1})\in\F$ is a directed edge for $1\leq i<k$. The arc $(v_i,v_{i+1})\in\F$, represents the vertex $v_i$ changing $v_{i+1}$ from white to blue, or $v_i\rightarrow v_{i+1}$, in the zero forcing process. A characterization of zero forcing sets in the context of {\it forcing arc sets} was given in \cite{zfpar, chaintwist}. This characterization relies on a particular structure that may arise in the formation of an arc set called a {\it chain twist}.

\begin{defn}
\label{ctdef}
    Given a graph $G$ and arc set $\F$, a chain twist in $\F$ is a cycle, $v_1v_2\dots v_kv_1$ with the property
    \begin{equation}
    \label{chainprop}
        (v_i,v_{i+1})\notin\F\Rightarrow (v_{i-1},v_i)\in\F, \mbox{and } (v_{i+1},v_{i+2})\in\F,
    \end{equation}
    for all $1\leq i\leq k$, where addition is done modulo $k$.
\end{defn}

If a cycle in $\F$ satisfies Definition \ref{ctdef}, then we will say that $\F$ {\it contains a chain twist}. It was shown in \cite{chaintwist} that the absence of any chain twists in an arc set $\F$ is necessary and sufficient to conclude that $\F$ is a forcing arc set.

\begin{thm}
\label{ctthm}
    \cite{chaintwist} Let $G$ be a graph and $\F$ an arc set of $G$. Then $\F$ is a forcing arc set if and only if $\F$ does not contain a chain twist.
\end{thm}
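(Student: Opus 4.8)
I would prove the two implications separately, using throughout the standard correspondence between a zero forcing process and its arc set. Recall that a forcing arc set $\F$ records an actual process, so every vertex is the tail of at most one arc of $\F$ and the head of at most one (a vertex forces, and is forced, at most once); hence $\F$ is a disjoint union of directed chains, and it comes with a \emph{chronology}, a time function $\tau$ on its arcs such that whenever $(u,v)\in\F$ is performed at step $t$, every neighbour $w\neq v$ of $u$ is already blue, i.e.\ $w$ has in-degree $0$ in $\F$ (hence lies in the initial blue set $S$) or $w$ is the head of an arc performed before step $t$. For the forward implication, suppose toward a contradiction that a forcing arc set $\F$ contains a chain twist $C=v_1v_2\cdots v_kv_1$, and set $a_i=(v_i,v_{i+1})$ with indices mod $k$. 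By \eqref{chainprop} the arcs $a_i$ are not all absent from $\F$ (an absent $a_i$ puts $a_{i-1}$ in $\F$), so I may pick $j$ with $a_j\in\F$ maximising $\tau(a_j)=:t^*$ over the $a_i$ lying in $\F$; then $v_{j+1}$, being the head of $a_j$, is forced exactly at step $t^*$. If $a_{j+1}=(v_{j+1},v_{j+2})\in\F$, then its tail $v_{j+1}$ is blue only from step $t^*$ on, so $a_{j+1}$ is performed after step $t^*$, contradicting maximality. If $a_{j+1}\notin\F$, then \eqref{chainprop} forces $a_{j+2}=(v_{j+2},v_{j+3})\in\F$, so $\tau(a_{j+2})\le t^*$; but performing $a_{j+2}$ needs the neighbour $v_{j+1}$ of $v_{j+2}$ (which is $\neq v_{j+3}$ since $k\geq 3$) to be blue already, whereas $v_{j+1}$ is forced only at step $t^*\geq\tau(a_{j+2})$ --- a contradiction. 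Hence no chain twist can occur.

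For the converse, take an arc set $\F$ that is a disjoint union of chains with sources $S$ and contains no chain twist, and simply run the process greedily from $S$: at each stage perform any not-yet-performed $(u,v)\in\F$ whose tail $u$ is blue and all of whose neighbours other than $v$ are blue. Such a step is a legal force and is consistent with $\F$, since $(u,v)$ is the unique arc leaving $u$ and its head $v$ is still white. The point is to show this never stalls with arcs of $\F$ unperformed. If it did, let $\F_1\neq\emptyset$ be the unperformed arcs; each $e=(u,v)\in\F_1$ is then blocked by some still-white vertex of $\{u\}\cup(N(u)\setminus\{v\})$, and a white vertex is not a source and is not yet forced, so its unique in-arc lies in $\F_1$. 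Choosing one such blocking vertex for each $e$ assigns to every $e\in\F_1$ a \emph{predecessor} $\pi(e)\in\F_1$ whose head is $u$ (if $u$ is white) or a white neighbour of $u$ other than $v$ (if $u$ is blue). Since $\F_1$ is finite, iterating $\pi$ yields a directed cycle $e_1\to e_2\to\cdots\to e_m\to e_1$ of distinct arcs of $\F_1$ with $e_p=\pi(e_{p+1})$. Writing $e_p=(x_p,y_p)$, each link is of \emph{type A} ($y_p=x_{p+1}$, with $x_{p+1}$ white) or of \emph{type B} ($y_p$ a white neighbour of $x_{p+1}$ with $y_p\notin\{x_{p+1},y_{p+1}\}$, and $x_{p+1}$ blue).

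From this cyclic blockage I would extract a chain twist, contradicting the hypothesis. Let $W$ be the closed walk $x_1\to y_1\rightsquigarrow x_2\to y_2\rightsquigarrow\cdots\to y_m\rightsquigarrow x_1$, where $x_p\to y_p$ follows the arc $e_p$ and $y_p\rightsquigarrow x_{p+1}$ is empty at a type-A link and is the edge $y_px_{p+1}$ at a type-B link. Then $W$ is a simple cycle: $x_p=x_q$ or $y_p=y_q$ with $p\neq q$ would violate the in/out-degree bounds of $\F$, while $x_p=y_q$ would mean the in-arc $e_q$ of the white vertex $y_q=x_p$ equals $\pi(e_p)=e_{p-1}$, so $q=p-1$, a type-A link contributing no new vertex. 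At least one link is of type B (otherwise $\F$ would contain a directed cycle), so $W$ has length $\geq 3$. Moreover the edges $e_p$ are forward arcs of $W$ lying in $\F$, whereas each type-B connecting edge $y_p\rightsquigarrow x_{p+1}$ is a forward arc of $W$ not in $\F$: since $y_p$ is white, the force $(y_p,x_{p+1})$ was never performed, and were $(y_p,x_{p+1})$ in $\F$ it would be the in-arc of the already-blue vertex $x_{p+1}$, hence performed. Reading the edges of $W$ cyclically, every absent forward arc $y_p\rightsquigarrow x_{p+1}$ is flanked by the present arcs $e_p$ and $e_{p+1}$, so $W$ satisfies \eqref{chainprop} and is a chain twist. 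Hence the greedy process cannot stall, and running it to completion exhibits $\F$ as a forcing arc set.

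I expect the chronology set-up and the forward implication to be routine, and the reduction of the converse to a cyclic blockage to be mostly bookkeeping. The main obstacle is the final step: turning the cyclic blockage into an honest cycle of $G$ meeting Definition~\ref{ctdef}. The delicate points there are excluding the self-intersections of the walk $W$ --- where one needs both the in/out-degree bounds and the fact that the predecessor of an arc with white tail is exactly that tail's in-arc --- and checking that the connecting (type-B) edges of $W$ are precisely the forward arcs missing from $\F$, which is what makes \eqref{chainprop} come out.
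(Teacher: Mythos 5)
The paper does not prove this statement at all: Theorem~\ref{ctthm} is imported from \cite{chaintwist} and used as a black box, so there is no in-paper argument to compare yours against. Judged on its own, your proof is essentially correct and complete in both directions: the forward direction by taking a chronology and the latest-performed arc of the chain twist (the two cases $a_{j+1}\in\F$, $a_{j+1}\notin\F$ both contradict maximality exactly as you say), and the converse by greedily simulating the process, extracting a cyclic blockage via the predecessor map $\pi$, and converting it into a cycle of $G$ satisfying \eqref{chainprop}. The delicate steps you worried about (ruling out self-intersections of $W$ using the in-/out-degree bounds plus the convention that a white tail's predecessor is its own in-arc, and showing the type-B connecting edges are forward non-arcs because their tails never turn blue) are handled correctly.

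Two caveats, both minor but worth stating explicitly. First, your converse assumes $\F$ is a disjoint union of chains (in- and out-degree at most one). Under the paper's literal Definition~\ref{arc_set} this is an extra hypothesis, and it is genuinely needed: two arcs leaving one vertex of a star contain no chain twist (the graph has no cycles) yet cannot be a forcing arc set, so the equivalence only holds for arc sets with the chain structure, which is what \cite{chaintwist} intends and what this paper uses (``$\F$ is a spanning forest of dipaths''). You should state this as part of the standing hypothesis rather than ``recalling'' it. Second, your claim that $W$ has length at least $3$ needs the case $m=1$ excluded: a fixed point $\pi(e_1)=e_1$ would make the blocking vertex of $e_1$ equal to its own head, which is impossible both in type A (it would be a loop) and in type B (the blocking vertex is required to differ from the head), so $m\geq 2$ and, with at least one type-B link, the cycle indeed has length at least $3$. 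With these two points made explicit, the argument stands as a valid self-contained proof of the cited theorem.
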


We will denote a graph $G$ along with a forcing arc set $\F$ by $(G,\F)$. As forcing arc sets also correspond to a completed zero forcing process for some initial set of blue vertices, $S$, Theorem \ref{ctthm} tells us that any arc set without a chain twist also produces a zero forcing set for $G$. The zero forcing set consists of the initial vertices of each chain. From the size of $\F$, we can calculate the size of the corresponding zero forcing set since $\F$ is a spanning forest of dipaths. These two values are related by
\begin{equation}
\label{arc_to_zf}
    |S|=|V|-|\F|
\end{equation}
In other words, the number of chains in $\F$ is the size of the zero forcing set. 

The Cartesian product of two graphs, $G$ and $H$, denoted $G\square H$, is the graph with vertex set $V(G)\times V(H)$ and edges 
\begin{align*}
    (u,x)(v,y)\in E(G\square H) \iff& uv\in E(G)\text{ and }x=y,\text{ or }\\
        &u=v\text{ and }xy\in E(H).
\end{align*}

In \cite{minrank}, the Cartesian product was shown to provide a bound on the zero forcing number of the product graph in terms of the factors.

\begin{thm}
\label{cpbound}
    \cite{minrank} Let $G$, $H$ be graphs. Then $$Z(G\square H)\leq\min\{Z(G)|H|,Z(H)|G|\}.$$
\end{thm}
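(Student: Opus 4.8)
The plan is to prove the two inequalities separately; since $G \square H \cong H \square G$ it suffices to establish $Z(G \square H) \le Z(G)\,|H|$. Fix a minimum zero forcing set $S$ of $G$, so $|S| = Z(G)$, and consider the set $S \times V(H) \subseteq V(G \square H)$, which has size $Z(G)\,|H|$. I would show that $S \times V(H)$ is a zero forcing set of $G \square H$, which immediately gives the bound.

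To see this, think of $G \square H$ as $|V(H)|$ disjoint copies of $G$ --- the \emph{fibers} $G_y := \{(v,y) : v \in V(G)\}$ for $y \in V(H)$ --- together with the cross edges $(u,y)(u,y')$ whenever $yy' \in E(H)$. Since $S$ is a zero forcing set of $G$, fix a sequence of forces $u_1 \to v_1,\ u_2 \to v_2,\ \dots,\ u_m \to v_m$ that, applied in order starting from $S$, turns all of $V(G)$ blue. The idea is to replicate this sequence simultaneously in every fiber: starting from $S \times V(H)$, process the forces in $m$ batches, where batch $t$ performs the forces $(u_t, y) \to (v_t, y)$ for every $y \in V(H)$, in arbitrary internal order.

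The key step is verifying that every such force is legal. I would prove, by induction on $t$, that immediately before batch $t$ the blue set of $G \square H$ is exactly $\{(w,y) : w \text{ is blue in } G \text{ immediately before the force } u_t \to v_t\}$; the base case $t=1$ holds because $S \times V(H)$ is precisely $\{(w,y) : w \in S\}$. Granting the inductive hypothesis, when batch $t$ applies $(u_t,y) \to (v_t,y)$ the neighbours of $(u_t,y)$ in $G \square H$ are the vertices $(w,y)$ with $u_t w \in E(G)$ and the vertices $(u_t,y')$ with $yy' \in E(H)$; the latter are all blue because $u_t$ is blue in $G$ (it is doing the forcing), and among the former exactly $(v_t,y)$ is white because $v_t$ is the unique white $G$-neighbour of $u_t$. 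One also checks that performing the forces within a single batch in some order does not interfere, since $(v_t,y_1)$ is never adjacent to $(u_t,y_2)$ for $y_1 \ne y_2$ (their first coordinates $v_t \ne u_t$ differ and their second coordinates differ). Thus batch $t$ is legal and leaves the blue set equal to $\{(w,y) : w \text{ blue in } G \text{ after the force } u_t \to v_t\}$, completing the induction. After batch $m$ every $v \in V(G)$ is blue in $G$, so every $(v,y)$ is blue, i.e.\ $S \times V(H)$ is a zero forcing set of $G \square H$.

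The argument is essentially bookkeeping, and the only point requiring care is the uniform-batch structure: one must apply \emph{all} copies of a given $G$-force before moving to the next one, so that the ``coloured identically across all fibers'' invariant is maintained and each $(u_t,y)$ sees exactly one white neighbour. With this in hand, $Z(G \square H) \le |S \times V(H)| = Z(G)\,|H|$, and running the same argument with the roles of $G$ and $H$ exchanged gives $Z(G \square H) \le Z(H)\,|G|$; taking the minimum yields the theorem. (Equivalently, in the language of Definition \ref{arc_set} and Theorem \ref{ctthm}, one could take a forcing arc set $\F$ of $G$ realising $S$ and let $\mathcal{F}' = \{((u,y),(v,y)) : (u,v) \in \F,\ y \in V(H)\}$, then verify $\mathcal{F}'$ contains no chain twist; but the direct colour-change argument above is shorter.)
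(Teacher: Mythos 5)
Your proof is correct, but it takes a genuinely different route from the paper's. The paper does not reprove Theorem \ref{cpbound} by the direct colour-change argument you give (which is essentially the original argument of \cite{minrank}); instead it re-derives the bound in Proposition \ref{cartprod} using the forcing-arc-set machinery: it places a copy of a forcing arc set $\F$ of $G$ in every fiber of $G\square H$, assumes for contradiction that the resulting arc set has a chain twist $C$, projects $C$ onto the factor $G$ by collapsing the $H$-edges (which are never arcs and hence must be flanked by arcs), and obtains a closed walk in $G$ satisfying property (\ref{chainprop}); Theorem \ref{chainwalk} then produces a chain twist in $\F$, a contradiction. Your batch-by-batch simulation of a forcing sequence of $G$ in all fibers is more elementary and self-contained: the invariant that all fibers are coloured identically between batches, together with your observation that $(v_t,y_1)$ and $(u_t,y_2)$ are non-adjacent when $y_1\neq y_2$ (so forces within a batch cannot interfere), makes every force legal, and symmetry gives the second inequality, so the bound follows. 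What the paper's argument buys is the projection technique itself, which is reused later (Proposition \ref{chainarc}, Lemma \ref{ctp_lemma}) to control chain twists in $\Q_n$; your closing parenthetical, taking $\F'=\{((u,y),(v,y)) : (u,v)\in\F,\ y\in V(H)\}$ and checking it has no chain twist, is exactly the paper's proof, though you did not carry that version out.
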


\begin{figure}[!t]
    \centering
    \includegraphics[scale=0.5]{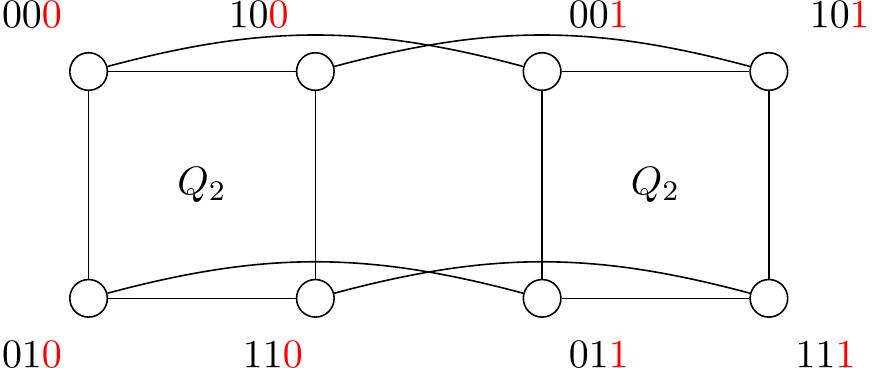}
    \caption{An example of constructing $Q_3$ from two copies of $Q_2$ using the standard matching. Appended digits indicated in red.}
    \label{Q3demo}
\end{figure}

The $n$-dimensional hypercube, $Q_n$, is the graph with vertex set $\{0,1\}^n$, all bit-strings of length $n$, where vertices are adjacent exactly when their bit-strings differ in exactly one position. We can also construct $Q_n$ inductively for $n\geq1$ as $Q_n=Q_{n-1}\square K_2$, where $Q_0$ is a single vertex. This can be thought of as taking two copies of $Q_{n-1}$, appending 0 to the end of all vertices in one copy, appending 1 to all vertices of the other copy, and forming a matching between the copies that connects all vertices that differ in only the last bit, illustrated in Figure \ref{Q3demo}. 

The zero forcing number of the hypercube was determined in \cite{minrank} to be $Z(Q_n)=2^{n-1}$. The vertices of one copy of $Q_{n-1}$ form a zero forcing set for $Q_n$. Since all neighbours of blue vertices in the same copy of $Q_{n-1}$ will be blue vertices, the neighbour in the other copy will be the only white neighbour and will, therefore, be forced. So, a zero forcing set with half of the vertices of $Q_n$ can always be constructed this way. The authors of \cite{minrank} found a lower bound on the maximum nullity of the hypercube graph of $2^{n-1}$ in order to show that this is the best possible. 

Twisted hypercubes are a generalization of the standard hypercube graph, first introduced in \cite{twistcube}, that have been studied for their high connectivity and low diameter \cite{thqbe}. The unique 0-dimensional twisted hypercube is the graph that consists of a single vertex, represented by the empty string. An $n$-dimensional twisted hypercube is a graph formed by taking two $(n-1)$-dimensional twisted hypercubes, not necessarily the same, appending 0 to the end of all vertices in one copy, appending 1 to all vertices of the other copy, and forming any matching between the copies. An example is given in Figure \ref{thqdemo}. We will refer to the matching in which all edges consist of vertices which differ in only the final bit as the {\it standard matching}, and any edges whose endpoints differ in more than one bit as {\it twisted edges}. For a vertex $v$, we will refer to the unique vertex $u\in N(v)$ that differs in the final bit as the {\it twin of $v$}. Note that the twin of a vertex in a twisted hypercube could differ in more than just the final position of the bit string, and every vertex has exactly one twin.

\begin{figure}[t]
    \centering
    \includegraphics[scale=0.5]{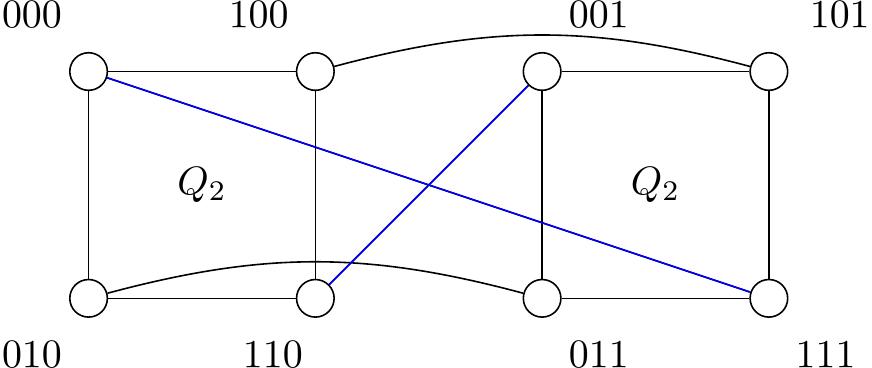}
    \caption{An example of a 3-dimensional twisted hypercube constructed from two copies of $Q_2$ with twisted edges indicated in blue.}
    \label{thqdemo}
\end{figure}

While the zero forcing number of the hypercube was determined to be $Z(Q_n)=2^{n-1}$, the zero forcing number of twisted hypercubes is not known in general. All twisted hypercubes of dimension $n$ have zero forcing number at most $2^{n-1}$, as taking one of the lower dimensional twisted hypercubes as our initial set is always a zero forcing set. In \cite{mythesis}, we show that no twisted hypercube of dimension less than 4 has zero forcing number smaller than that of the same size hypercube. By brute force computation, we were able to find twisted hypercubes of dimension 4, 5, and 6 with zero forcing number 7, 13, and 25 respectively and establish by computation that no twisted hypercube of the same dimension has smaller zero forcing number. Using Theorem \ref{cpbound} we found, in \cite{mythesis}, a family of twisted hypercubes with zero forcing number at most $2^{n-1}-2^{n-4}-2^{n-5}-2^{n-6}$. These results hint towards a family of twisted hypercubes whose zero forcing numbers follow this pattern of doubling and subtracting one. The primary result of this paper is to prove this result, conjectured in \cite{mythesis}:

\begin{thm}
    \label{zfbound}
    There exists a family of twisted hypercubes with zero forcing number that is bounded above by $2^{n-1}-2^{n-3}+1$ for $n\geq3$.
\end{thm}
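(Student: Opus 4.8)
The plan is to construct the family inductively, building an $n$-dimensional twisted hypercube $\widehat{Q}_n$ together with an explicit forcing arc set $\F_n$ whose number of chains meets the bound. By Equation~\eqref{arc_to_zf}, since $|V(\widehat{Q}_n)| = 2^n$, it suffices to produce $\F_n$ with $|\F_n| \geq 2^n - (2^{n-1} - 2^{n-3} + 1) = 2^{n-1} + 2^{n-3} - 1$ chains-worth of arcs, and by Theorem~\ref{ctthm} the only thing we must check is that $\F_n$ contains no chain twist. The base case is dimension $3$: I would exhibit by hand a specific twisted $\widehat{Q}_3$ on $8$ vertices with a zero forcing set of size $2^{3-1} - 2^{3-3} + 1 = 4$, equivalently an arc set with $4$ arcs and no chain twist, matching the brute-force data mentioned in the introduction for small cases.

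For the inductive step, suppose $\widehat{Q}_{n-1}$ has been built with arc set $\F_{n-1}$ realizing the bound. I would form $\widehat{Q}_n$ from two carefully chosen lower-dimensional pieces: one copy that is $\widehat{Q}_{n-1}$ itself (the "good" half, carrying the recursively small forcing set) and one copy that is the ordinary hypercube $Q_{n-1}$ (the "cheap to force" half), joined by a matching $M$ that is mostly standard but contains a controlled number of twisted edges. The arc set $\F_n$ would consist of: (i) the arcs of $\F_{n-1}$ inside the good copy; (ii) arcs from the good copy across $M$ into the $Q_{n-1}$ copy; and (iii) arcs inside the $Q_{n-1}$ copy that propagate through it exactly as in the standard argument that one copy of $Q_{n-2}$ forces $Q_{n-1}$. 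Counting the chains: the good half contributes $Z(\widehat{Q}_{n-1}) = 2^{n-2} - 2^{n-4} + 1$ chain-starts, but the twisted edges let us "recycle" chains crossing $M$ so that a block of the $Q_{n-1}$ side is forced from the good side rather than needing fresh starts; the bookkeeping should leave $2^{n-2} - 2^{n-4} + 1$ total chains for $\widehat{Q}_n$, which requires that the number of extra chain-starts needed in the $Q_{n-1}$ half be cancelled exactly by the savings from twisting — this is where the $2^{n-3}$ and the constant $+1$ are pinned down, and it is the step I would set up most carefully.

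The main obstacle is verifying that no chain twist appears in $\F_n$. A chain twist is a cycle satisfying~\eqref{chainprop}, and such a cycle could in principle weave between the two copies using the matching edges and the twisted edges. I would argue this in two layers: first, any cycle lying entirely within the good copy would give a chain twist in $\F_{n-1}$, contradicting the inductive hypothesis (via Theorem~\ref{ctthm}); any cycle entirely within the $Q_{n-1}$ copy is ruled out because the arcs there form the standard acyclic forcing pattern. Second, for a cycle that crosses the matching, I would track how many times it can switch copies: each crossing uses a matching edge, and the orientation constraints in~\eqref{chainprop} together with the fact that matching arcs all point from the good copy into the $Q_{n-1}$ copy force any would-be chain twist to traverse an edge in $\F_n$ in a direction that does not exist, or to close up only by using a twin-edge in the wrong orientation. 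The delicate part is that twisted edges change which vertex is the twin, so the "differ in one bit" intuition for tracking cycles no longer applies directly; I expect to need a parity or potential-function argument on the coordinates, or a direct case analysis on how a cycle interacts with the (few) twisted matching edges, to close this off. Once the no-chain-twist property is established, Theorem~\ref{ctthm} and Equation~\eqref{arc_to_zf} immediately yield the claimed upper bound $2^{n-1} - 2^{n-3} + 1$, completing the induction.
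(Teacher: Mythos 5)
There is a genuine gap: your inductive step is a plan rather than a construction, and the two places where the whole difficulty of the theorem lives are exactly the places you leave open. First, the construction itself is unspecified — you do not say which matching edges are twisted, which vertices of the ``good'' copy send arcs across $M$, or why those arcs can legally extend existing chains (a crossing arc must leave a vertex with no out-arc in $\F_{n-1}$, i.e.\ a chain end, and must enter a vertex with no in-arc on the other side). Your bookkeeping as written is also off: you say the construction ``should leave $2^{n-2}-2^{n-4}+1$ total chains for $\widehat{Q}_n$,'' but that is the target for dimension $n-1$; for $\widehat{Q}_n$ you need $2^{n-1}-2^{n-3}+1$ chains, so in your asymmetric scheme the $Q_{n-1}$ half must absorb $2^{n-2}-2^{n-4}$ new chain starts, i.e.\ exactly $2^{n-4}$ of the standard $2^{n-2}$ chain starts must be replaced by crossing arcs — none of which is set up. Second, and more seriously, the no-chain-twist verification is deferred to ``a parity or potential-function argument, or a direct case analysis,'' which is precisely the content that must be supplied: since $|\F_n|=2^{n-1}+2^{n-3}-1$ exceeds the maximum possible arc-set size $2^{n-1}$ in the untwisted hypercube, the twisted edges are not an optional bookkeeping device but the only thing preventing chain twists, and their placement has to be chosen hand-in-hand with the extra arcs. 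Until that is done, Theorem~\ref{ctthm} cannot be invoked and the bound does not follow.

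For comparison, the paper avoids your asymmetric ``minority cube plus ordinary hypercube'' recursion entirely: $\Q_n$ is built from \emph{two} copies of the same $(\Q_{n-1},\F_{n-1})$, with the standard matching except for exactly two twisted edges per level, and only \emph{one} new arc (the bridge arc) joining two vertices that were isolated in their copies, giving $|\F_n|=2|\F_{n-1}|+1$ and hence the recursion $z_n=2z_{n-1}-1$ for the number of chains. The chain-twist exclusion is then handled in two concrete steps: a projection argument (in the spirit of Proposition~\ref{cartprod}) showing that any chain twist in $\F_n$ would have to use the bridge arc, and a finite case analysis (Lemma~\ref{ctp_lemma}, using the classification of chains in Lemma~\ref{arc_size}, Observations~\ref{twistedge}--\ref{newtwist} and~\ref{no_11_arcs}) showing that a chain twist path starting at the bridge arc dies after at most one arc once it recrosses the matching — the two twisted edges are placed exactly so that the relevant matching neighbours are isolated or arc-heads. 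Your proposal would need analogues of both of these steps, worked out for a construction with many crossing arcs rather than one, which is a substantially harder verification than the one the paper performs.
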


We will refer to this family of twisted hypercubes as the {\it minority cubes}, which we will denote by $\Q_n$. To prove Theorem \ref{zfbound},  we will construct the minority cubes in Section \ref{const}, and Section \ref{proof} will consist of proving that these twisted hypercubes have the desired property.

In this section, we begin by constructing the 4-dimensional minority cube, $\Q_4$, and the arc set $\F_4$ for illustrative purposes. We will then outline how to construct the family $\Q_n$. Moreover, we will construct the arc set $\F_n$ in $\Q_n$. Later, in Observation \ref{num_arcs}, we show that $\F_n$ has the correct size, and in Theorem \ref{ct_thm} we show that $\F_n$ does not contain a chain twist. In the arc set, we will denote the arc $uv$ as $u\rightarrow v$. We will denote an edge in the graph that is not an arc $xy$ as $x-y$.
\section{Construction}

\label{construction}

In this section, we begin by constructing the 4-dimensional minority cube, $\Q_4$, and the arc set $\F_4$ for illustrative purposes. We will then outline how to construct the family $\Q_n$. Moreover, we will construct the arc set $\F_n$ in $\Q_n$. Later, in Observation \ref{num_arcs}, we show that $\F_n$ has the correct size, and in Theorem \ref{ct_thm} we show that $\F_n$ does not contain a chain twist. In the arc set, we will denote the arc from $u$ to $v$ as $u\rightarrow v$. We will denote an edge with endpoints $x$ and $y$ in the graph that is not an arc as $x-y$.

\subsection{Constructing $(\Q_4,\F_4)$}

For this construction, we develop the minority cubes and the forcing arc sets beginning with dimension $n=3$ and iteratively extending this construction to higher dimensions. As we are only concerned with twisted hypercubes of dimension $\geq3$, we begin our construction with a copy of $\Q_3=Q_3$, being the 3-dimensional cube, with arc set $\F_3$ consisting of four chains, two of which are isolated vertices. 

\begin{figure}
    \centering
    \includegraphics[scale=0.5]{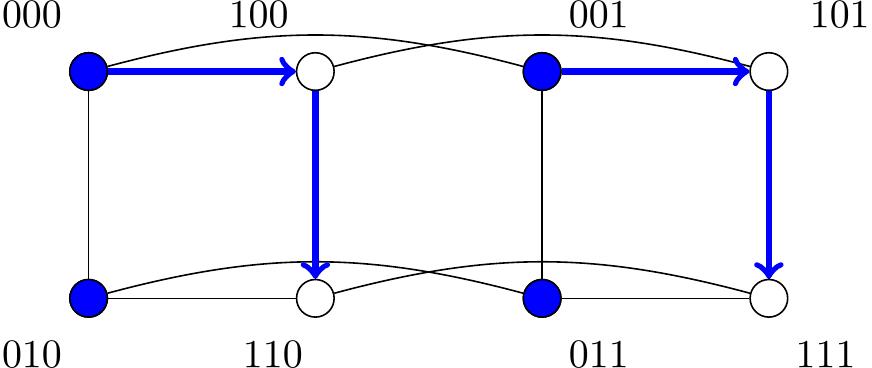}
    \caption{$(\Q_3,\F_3)$. The arc set $\F_3$ drawn on the graph $Q_3$.}
    \label{Q3zfs}
\end{figure}

These chains, as seen in Figure \ref{Q3zfs}, are 
\begin{align*}
    000&\rightarrow100\rightarrow110\\
    010&\\
    001&\rightarrow101\rightarrow111\\
    011&.
\end{align*}
This forms the base of our construction, $(\Q_3, \F_3)$.

\begin{figure}[ht]
    \centering
    \includegraphics[scale=0.5]{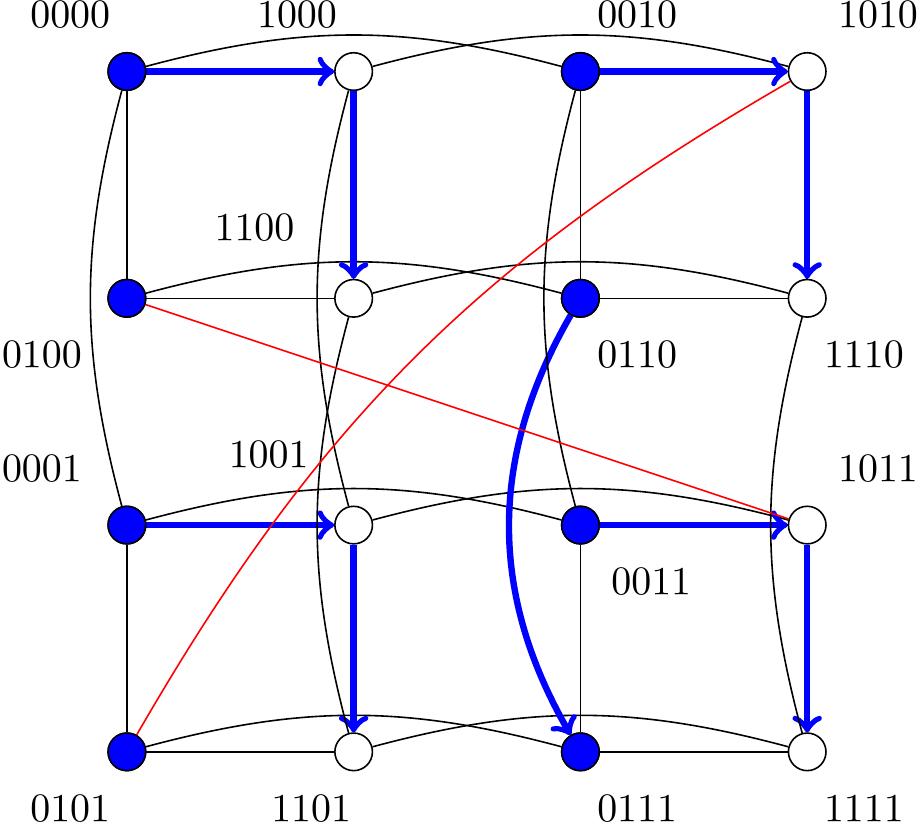}
    \caption{The forcing arc set $\F_4$ indicated on the graph $\Q_4$ with twisted edges shown in red.}
    \label{Q4zfs}
\end{figure}

To construct $\Q_4$, we consider two copies of $\Q_3$ where we append 0 to one copy's bit strings and 1 to the other's. The matching between the two copies will follow the standard matching for all but two edges. These twisted edges will be $0100-1011$ and $1010-0101$. To construct $\F_4$, we take the union of the copies of $\F_3$ in each of the copies of $\Q_3$. We now add one additional arc between the two copies of $(\Q_3,\F_3)$ from $0110\rightarrow0111$. Call this the {\it bridge arc}. Note that these vertices are adjacent in our matching, neither vertex is incident to one of the twisted edges, and they differ in only the final bit. The vertices 0110 and 0111 are both isolated in their respective copies of $\F_3$, so we are able to add this arc without impacting any other chains. This results in a 4-dimensional twisted hypercube along with a forcing arc set with $9=2|\F_3|+1$ arcs and two isolated vertices. This will be our $(\Q_4,\F_4)$. See Figure \ref{Q4zfs}.

\subsection{Construction for $n>4$}
\label{const}

To construct $(\Q_n,\F_n)$, $n\geq4$, we begin with two copies of $(\Q_{n-1},\F_{n-1})$ where we append 0 to one copy's bit strings and 1 to the other's. Let $\bar0$ be the string of all zeroes of length $n-4$. Note that $\bar0$ could be the empty string. The matching between the two copies will follow the standard matching for all but two edges. These twisted edges will be $01\bar{0}00-10\bar{0}11$ and $10\bar{0}10-01\bar{0}01$. We now add the bridge arc between the two copies of $(\Q_{n-1},\F_{n-1})$, $01\bar{0}10\rightarrow01\bar{0}11$. This will be our $(\Q_{n},\F_{n})$, illustrated in Figure \ref{Qnzfs}. Note that this construction also works for constructing $\Q_4$ from copies of $\Q_3$.

\begin{lem}
\label{rel_num_arcs}
    $(\Q_{n},\F_{n})$ has $2|\F_{n-1}|+1$ arcs and two isolated vertices.
\end{lem}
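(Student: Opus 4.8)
The plan is to prove Lemma \ref{rel_num_arcs} by induction on $n$, tracking two invariants simultaneously: the arc count and the number of isolated vertices in $\F_n$. The base case is $(\Q_3,\F_3)$, which we have described explicitly: $\F_3$ consists of four chains, two of which are the isolated vertices $010$ and $011$, and it contains exactly $4$ arcs (the arcs $000\rightarrow100$, $100\rightarrow110$, $001\rightarrow101$, $101\rightarrow111$). For the inductive step, I would unpack the construction of $(\Q_n,\F_n)$ for $n\geq 4$: it is built from two disjoint copies of $(\Q_{n-1},\F_{n-1})$ (one with $0$ appended, one with $1$ appended), whose arc sets are taken in union, plus a single additional arc, the bridge arc $01\bar0 10\rightarrow 01\bar0 11$.

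First I would observe that re-wiring the matching — replacing two standard-matching edges with the two twisted edges $01\bar000-10\bar011$ and $10\bar010-01\bar001$ — does not alter $\F_n$ at all, since none of the four endpoints of these twisted edges lies in $\F_{n-1}$-arcs incident to the matching (indeed the only inter-copy arc is the bridge arc), so the twisting affects only the edge set of the graph, not the arc set. Hence $|\F_n| = 2|\F_{n-1}| + 1$, giving the arc count immediately. Second, I would track the isolated vertices: by the inductive hypothesis each copy of $\F_{n-1}$ has exactly two isolated vertices, namely (after appending the bit) $01\bar0100$ and $01\bar0110$ in the $0$-copy, and $01\bar0101$ and $01\bar0111$ in the $1$-copy — here I should double-check the bookkeeping against the $n=3$ base case, where the isolated vertices are $010$ and $011$, so appending $0$ and $1$ and carrying through the $\bar0$ padding gives the four candidates above. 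The bridge arc is $01\bar0110\rightarrow 01\bar0111$ (matching the claim in the construction that these two vertices, written $0110$ and $0111$ in the $n=4$ illustration, are isolated in their respective copies). Adding it merges two of these four isolated vertices into a single length-one chain, leaving exactly the other two, $01\bar0100$ and $01\bar0101$, as the isolated vertices of $\F_n$.

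The main thing to get right — and the likely obstacle — is the careful identification of \emph{which} vertices are isolated in $\F_{n-1}$, since the labels shift as $n$ grows and the $\bar0$ padding sits in the middle of the string. This requires formulating the inductive hypothesis precisely enough to name the two isolated vertices of $\F_{n-1}$ explicitly (not just assert there are two of them), so that one can verify the bridge-arc endpoints $01\bar0 10$ and $01\bar0 11$ are indeed among them and are distinct from the twisted-edge endpoints. Once the inductive hypothesis is stated with the isolated vertices named, the step is a routine check. I would also remark, to close the loop with the construction section, that the same argument with $n=4$ and $\bar0$ the empty string recovers the explicit count $|\F_4| = 2|\F_3|+1 = 9$ with isolated vertices $0100$ and $0101$.
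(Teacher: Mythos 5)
Your overall strategy is essentially the paper's: each copy of $(\Q_{n-1},\F_{n-1})$ contributes $|\F_{n-1}|$ arcs and two isolated vertices, the re-wiring of the matching changes only edges (never arcs), and the bridge arc joins one isolated vertex from each copy, yielding $2|\F_{n-1}|+1$ arcs and leaving two isolated vertices. Strengthening the induction hypothesis to name the isolated vertices explicitly is a reasonable way to make this self-contained; the paper instead defers that identification to Lemma \ref{arc_size}(iii).

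However, the explicit labels you give --- precisely the bookkeeping you identify as the crux --- are wrong. The isolated vertices of $\F_{n-1}$ are $01\bar{0}^{n-4}0$ and $01\bar{0}^{n-4}1$; writing $\bar0=\bar{0}^{n-4}$ as in the construction, the four candidates after appending a bit are $01\bar{0}00$ and $01\bar{0}10$ (in the $0$-copy) and $01\bar{0}01$ and $01\bar{0}11$ (in the $1$-copy), the bridge arc is $01\bar{0}10\rightarrow01\bar{0}11$, and the survivors are $01\bar{0}00$ and $01\bar{0}01$, i.e.\ $01\bar{0}^{n-3}0$ and $01\bar{0}^{n-3}1$. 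Your strings $01\bar{0}100$, $01\bar{0}110$, $01\bar{0}101$, $01\bar{0}111$ each carry a spurious $1$ after the zero block: with $|\bar0|=n-4$ they have length $n+1$, and if one instead reads the block as $\bar{0}^{n-5}$ they name the endpoints of the two copies of the \emph{previous} bridge arc, which are not isolated (for $n=5$ your bridge arc would be $01110\rightarrow01111$, whereas the construction's is $01010\rightarrow01011$). Taken literally, the induction hypothesis you would carry is false and the key check --- that the new bridge arc joins two isolated vertices, one per copy --- fails. The repair is only to restate the labels as above: your $n=4$ instantiations $0110$, $0111$, $0100$, $0101$ are correct, and the general form comes from inserting $\bar0$ directly after the prefix $01$, not duplicating the following bit. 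With the corrected labels your remaining check also goes through: the twisted edges are incident (on the $01$ side) to $01\bar{0}00$ and $01\bar{0}01$, the two survivors, so $01\bar{0}10$ and $01\bar{0}11$ are joined by a standard matching edge and the bridge arc is indeed an arc on an edge of $\Q_n$.
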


\begin{proof}
    Each copy of $(\Q_{n-1},\F_{n-1})$ contributes $|\F_{n-1}|$ arcs and two isolated vertices. The bridge arc is formed between two isolated vertices, resulting in $2|\F_{n-1}|+1$ arcs and leaving two isolated vertices.
\end{proof}

\begin{figure}[!t]
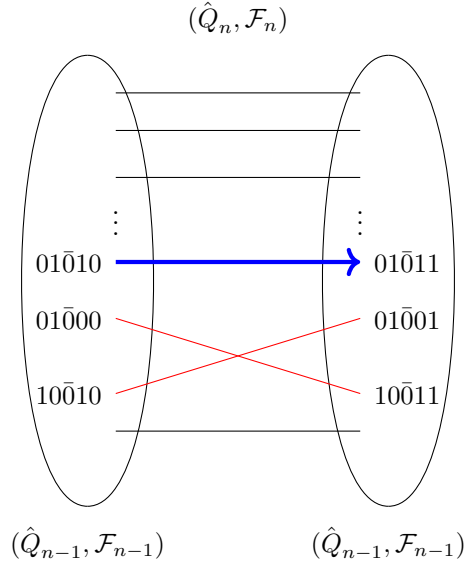

    \ctikzfig{hatQn}
    \caption{How the forcing arc set $\F_n$ is constructed from two copies of $(\Q_{n-1},\F_{n-1})$}
    \label{Qnzfs}
\end{figure}

To discuss these arcs more generally, we note that the first two bits in the tail of the arc determine the behaviour of the arc. So we can define substrings of our vertex labels to help us discuss these arcs. Let $a\in\{0,1\}^{n-2}$ be any bit string of length $n-2$. Then $\F_n$ contains arcs of the form $00a\rightarrow10a\rightarrow11a$ for all choices of $a$. In particular, these are the arcs that were \lq\lq inherited\rq\rq from $\F_3$ and copied at each iterative step. Note that these arcs form chains of length 2.

In each iterative step, we add a bridge arc of type $01\bar0 10\rightarrow01\bar0 11$. In subsequent iterations, this arc will become an arc between vertices $01\bar0 10 b\rightarrow01\bar0 11 b$, where $b$ is any string of the appropriate length. These arcs were added between isolated vertices of $\F_{n-1}$, so they form chains of length 1.
We formalize this in the following lemma.  Let $\bar0^k$ be the all-zero string of length $k$, where $\bar0^0$ is the empty string.

\begin{lem}
\label{arc_size}
    Let $n\geq 3$. All chains in $\F_n$ contain either 0, 1, or 2 arcs. 
    \begin{itemize}
        \item[(i)] For any bit string $a$ of length $n-2$,  $00a\rightarrow10a\rightarrow11a$ forms a chain of length 2, and all chains of length 2 are of this form. 
        \item[(ii)] All chains containing one arc are of the form $01\bar0^k0b\rightarrow 01\bar0^k1b$, where $0\leq k\leq n-3$ and $b$ can be any string of length $n-k-3$. Moreover, for any choice of $k$ and $b$ this arc is in $\F_n$.
        \item[(iii)] The chains containing zero arcs are the isolated vertices, which are $01\bar{0}^{n-3}0$ and $01\bar{0}^{n-3}1$.
    \end{itemize}  
\end{lem}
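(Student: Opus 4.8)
The plan is to prove all three parts simultaneously by induction on $n$, tracking exactly which arcs and isolated vertices appear in $\F_n$ at each iterative step of the construction. The base case $n=3$ is immediate from the explicit description of $\F_3$: the two chains of length 2 are $000\to100\to110$ and $001\to101\to111$, which are of the form $00a\to10a\to11a$ with $a\in\{0,1\}^1$; there are no chains with exactly one arc (here $n-3=0$, so the only admissible value is $k=0$ with $b$ of length $0$, but $01\bar0^00\to01\bar0^01$ means $010\to011$, and indeed this arc is \emph{not} in $\F_3$ — so I need to be slightly careful: part (ii) should be read as asserting which arcs of this form lie in $\F_n$, and for $n=3$ the claim is vacuous since $010$ and $011$ are the two isolated vertices); and the isolated vertices are $010=01\bar0^01$-pattern... let me restate: for $n=3$, $01\bar0^{n-3}0 = 010$ and $01\bar0^{n-3}1=011$, matching part (iii).

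For the inductive step, assume the statement holds for $\F_{n-1}$ and recall that $\Q_n$ is built from two copies of $(\Q_{n-1},\F_{n-1})$ with $0$ or $1$ appended, plus the bridge arc $01\bar0^{n-4}10\to01\bar0^{n-4}11$. First I would handle the length-2 chains: by the inductive hypothesis every length-2 chain of $\F_{n-1}$ has the form $00a'\to10a'\to11a'$ with $a'$ of length $n-3$; appending a bit $c\in\{0,1\}$ gives $00(a'c)\to10(a'c)\to11(a'c)$, and $a'c$ ranges over all strings of length $n-2$, so the two copies together produce exactly all chains $00a\to10a\to11a$. The bridge arc is not of this form (its tail starts $01$), so these are all the length-2 chains — establishing (i). Next, the length-1 chains: each length-1 chain of $\F_{n-1}$, namely $01\bar0^k0b'\to01\bar0^k1b'$ with $0\le k\le n-4$, yields upon appending $c$ the arc $01\bar0^k0(b'c)\to01\bar0^k1(b'c)$, and these are still chains of length 1 since appending a bit to both endpoints of an isolated arc does not connect it to anything new in $\F_{n-1}$; additionally the bridge arc contributes the new case $k=n-3$, $b$ the empty string. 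A short check shows $b'c$ ranges over all strings of length $n-k-3$ for each $k\le n-4$, and together with the bridge arc this gives exactly the family in (ii). Finally, for (iii): by the inductive hypothesis the isolated vertices of $\F_{n-1}$ are $01\bar0^{n-4}0$ and $01\bar0^{n-4}1$; appending $0$ and $1$ gives four candidate isolated vertices, but the bridge arc $01\bar0^{n-4}10\to01\bar0^{n-4}11$ consumes two of them, leaving $01\bar0^{n-4}00=01\bar0^{n-3}0$ and $01\bar0^{n-4}01=01\bar0^{n-3}1$ — wait, I must check the bridge consumes the right two: the bridge connects $01\bar0^{n-4}10$ and $01\bar0^{n-4}11$, which are the append-$0$ and append-$1$ versions of $01\bar0^{n-4}1$; so the survivors are the append-$0$ and append-$1$ versions of $01\bar0^{n-4}0$, i.e. $01\bar0^{n-3}0$ and $01\bar0^{n-3}1$, matching (iii).

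The main obstacle I anticipate is purely bookkeeping: verifying that appending bits to the inductively-described arcs produces \emph{exactly} the claimed families with no omissions or duplicates, and in particular confirming that the bridge arc's endpoints are indeed among the isolated vertices at each stage (which is what makes adding it legal) and that they are precisely the two vertices that get removed from the isolated-vertex list. This requires care with the indexing of the $\bar0$ blocks — the length of the all-zero block shifts by one at each step and interacts with the appended bit — but it is entirely mechanical once the recursion $\bar0^{n-3} = \bar0^{n-4}0$ is used consistently. A secondary point worth stating explicitly is that no arc changes its chain-length under the append operation: an arc that was isolated (length-1 chain) stays isolated because its neighbours in $\F_{n-1}$ were unchanged and the only new arc is the bridge, whose endpoints are distinct from all existing arc-endpoints; similarly length-2 chains stay length-2. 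With that observation in hand, the three parts follow directly from partitioning the chains of $\F_n$ according to the inductive description of $\F_{n-1}$ plus the single bridge arc.
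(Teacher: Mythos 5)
Your strategy is the same as the paper's: induct on $n$ and track how the two bit-appended copies of $\F_{n-1}$ together with the single bridge arc account for every chain of $\F_n$; your handling of parts (i) and (iii) matches the paper's proof essentially line by line. The genuine gap is the moment you absorb the bridge arc into the family of part (ii) as ``the new case $k=n-3$, $b$ empty.'' With $k=n-3$ and $b$ empty, the pattern $01\bar0^k0b\to01\bar0^k1b$ reads $01\bar0^{n-3}0\to01\bar0^{n-3}1$, which is the arc joining the two \emph{isolated} vertices of part (iii) --- exactly an arc that is never in $\F_n$ (and for $n\geq 4$ those two vertices are not even adjacent in $\Q_n$, since each is matched by a twisted edge to a different vertex). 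The bridge arc actually added by the construction is $01\bar0^{n-4}10\to01\bar0^{n-4}11$; for $n=4$ that is $0110\to0111$, while your $k=1$, $b$ empty instance is $0100\to0101$. Since the bit immediately preceding the flipped (last) position of the bridge arc is a $1$, the bridge arc matches $01\bar0^k0b\to01\bar0^k1b$ for no choice of $k$ and $b$, so the claim that the inherited arcs plus the bridge ``give exactly the family in (ii)'' is false and the induction for (ii) does not close. Your own base-case observation that $010\to011\notin\F_3$ was the first symptom of this, and it cannot be waved away by reading (ii) as a one-sided containment: for $n\geq4$ the bridge arc is a length-one chain not of the stated form, so both directions of (ii) fail as written.

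The repair is to use the form announced in the paragraph preceding the lemma: length-one chains are exactly $01\bar0^k10b\to01\bar0^k11b$ with $0\leq k\leq n-4$ and $b\in\{0,1\}^{n-k-4}$ (none when $n=3$). With that restatement your argument goes through verbatim: appending a bit to an inherited length-one chain just extends $b$, the new bridge arc is the instance $k=n-4$ with $b$ empty, and your isolated-vertex bookkeeping is unchanged. (For comparison, the paper's own proof makes the same leap, asserting the bridge arc ``has the desired form'' with $k=n-4$ and $b$ empty, which does not even produce a string of length $n$; the mismatch originates in the statement of (ii), not in your overall plan.) As submitted, though, your inductive step for (ii) rests on a false identification and already fails the concrete check at $n=4$.
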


\begin{proof}
    We prove this by induction.
    
    {\bf Base Case:} Let $n=3$. $\F_3$ is constructed so no chains have 3 or more arcs. The chains containing two arcs are $000\rightarrow100\rightarrow110$ and $001\rightarrow101\rightarrow111$. There are no chains with a single arc and the isolated vertices in $\F_3$ are $010$ and $011$, which fits the formula since $\bar0^{3-3}$ is the empty string. See Figure \ref{Q3zfs}.

    {\bf Induction Step:} Let $n>3$. Assume that $\F_{n-1}$ contains only chains with less than three arcs, all of which have the correct form. 

    Then both copies of $\F_{n-1}$ contribute only chains with 0, 1, or 2 arcs. The bridge arc is constructed between two vertices that were isolated in $\F_{n-1}$. Since neither of these vertices are a part of a chain with other arcs, the bridge arc is therefore a chain with a single arc. None of these chains have 3 or more arcs.

    By induction, the chains that contain two arcs in $\F_{n-1}$ are of the form $00a\rightarrow10a\rightarrow11a$, where $a\in\{0,1\}^{n-3}$ is any bit string of length $n-3$. The chains from the first copy of $\F_{n-1}$ have a 0 appended to the end of each of their vertices' bit strings, and so have the form $00a0\rightarrow10a0\rightarrow11a0$, or $00a'\rightarrow10a'\rightarrow11a'$ where $a'\in\{0,1\}^{n-2}$ is a bit string of length $n-2$ ending in 0. Similarly, the chains from the second copy will have the form $00a''\rightarrow10a''\rightarrow11a''$ where $a''\in\{0,1\}^{n-2}$ is a bit string of length $n-2$ ending in 1. As we do not create any chains with two arcs that cross the matching, these are all of the chains with two arcs. This shows that all chains of length two in $\F_n$ have the required form.

    Now consider chains of length one. By induction, any 
    chain of consisting of one arc has the form $01\bar0^k 0b\rightarrow01\bar0^k1b$ where $0\leq k\leq n-4$ and $b$ is any string of length $n-k-4$. The chain from the first copy of $\F_{n-1}$ has a 0 appended to the end of the bit strings of each of their vertices, and so has the form $01\bar0^k0b'\rightarrow01\bar0^kb'$, where $b'=b0$ has length $n-k-3$. Thus, the arc still has the correct form. Similarly, the chain from the second copy will have the  correct form, if we take  $b'=b1$. 
    The bridge arc, $01\bar{0}^{n-4}10\rightarrow01\bar{0}^{n-4}11$, is a new chain with a single arc since it connects two copies of $01\bar{0}^{n-4}1$, which is isolated in $\F_{n-1}$. Taking $k=n-4$ and $b$ is the empty string, we see that the bridge arc also has the desired form.
    Moreover, the argument shows that for any $0\leq k\leq n-3$ and for any string $b'$ of length $n-k-3$, $01\bar0^k0b'\rightarrow01\bar0^kb'$ is an arc that forms a chain of length 1.

   Finally, we consider the isolated vertices. By induction, the isolated vertices in $\F_{n-1}$ are
   $01\bar{0}^{n-4}0$ and $01\bar{0}^{n-4}1$. The two copies of $01\bar{0}^{n-4}1$ are connected by the bridge arc in $\F_n$, so are no longer isolated. This leaves only $01\bar{0}^{n-4}00$ and $01\bar{0}^{n-4}01$, the two copies of $01\bar{0}^{n-4}0$, as isolated vertices in $\F_n$. These two vertices have the correct form.
   
    So, all chains in $\F_n$ are of the desired form.

\end{proof}

We are able to organize vertices into classes based on the first two digits of their bit string. We will call vertices of the form $00a$ 00-vertices, vertices of the form $10a$ 10-vertices, and so on. Similarly, we will differentiate arcs by the vertex at the tail of the arc. So arcs of the form $00a\rightarrow10a$ will be 00-arcs, $10a\rightarrow11a$ will be 10-arcs, and $01\bar0^k0b\rightarrow01\bar0^k1b
$ will be 01-arcs. Given any vertex bit string, we are able to determine whether there is an arc into or out of that vertex.

There are a few facts about the structure of this twisted hypercube that will prove useful in the following section.

\begin{obs}
\label{twistedge}
    By definition, twisted edges are only ever incident to 10-vertices and 01-vertices.
\end{obs}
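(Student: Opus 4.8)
The plan is to prove this by induction on the dimension $n$, tracking the first two bits of the endpoints of every twisted edge as the construction proceeds. The whole content of the statement is that the construction only ever introduces twisted edges whose endpoint labels begin with $01$ or $10$, and that no later iteration disturbs those two leading bits.

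For the base case, recall that $\Q_3=Q_3$ is assembled using the standard matching, so it contains no twisted edges at all and the statement holds vacuously.

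For the inductive step, assume that in $\Q_{n-1}$ every twisted edge is incident only to $01$-vertices and $10$-vertices. Recall that $\Q_n$ is built from two copies of $\Q_{n-1}$, one with $0$ appended to every vertex label and one with $1$ appended, joined by a matching that is standard except for the two edges $01\bar{0}^{n-4}00-10\bar{0}^{n-4}11$ and $10\bar{0}^{n-4}10-01\bar{0}^{n-4}01$. Hence the twisted edges of $\Q_n$ split into two groups. The first group consists of the twisted edges inherited from the two copies of $\Q_{n-1}$: each such edge is an old twisted edge of $\Q_{n-1}$ with a single bit appended to the end of both endpoint labels, so the leading two bits are unchanged, and by the inductive hypothesis both endpoints are still $01$- or $10$-vertices. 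The second group consists of the two newly introduced twisted edges above; by inspection all four of their endpoints have labels beginning with $01$ or $10$. Since these two groups exhaust the twisted edges of $\Q_n$, every twisted edge of $\Q_n$ is incident only to $01$-vertices and $10$-vertices, completing the induction.

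There is no real obstacle here; the only care needed is the bookkeeping point that passing from $\Q_{n-1}$ to $\Q_n$ alters vertex labels only by appending a final bit (hence never changes the first two coordinates) and introduces exactly the two stated non-standard matching edges, so that no twisted edge outside the two described groups is ever created.
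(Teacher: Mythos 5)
Your proof is correct, and it is simply a careful formalization of the same reasoning the paper leaves implicit when it states the observation as holding ``by definition'': at each step the only non-standard matching edges are $01\bar{0}^{n-4}00-10\bar{0}^{n-4}11$ and $10\bar{0}^{n-4}10-01\bar{0}^{n-4}01$, and appending a final bit never changes the first two coordinates of a label. So you take essentially the same approach as the paper, just spelled out as an explicit induction.
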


\begin{obs}
\label{newtwist}
    The twisted edges in the matching between copies of \newline$(\Q_{n-1}, \F_{n-1})$ are incident to a 10-vertex
    and an isolated vertex in $\F_n$.
\end{obs}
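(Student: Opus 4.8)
The plan is to read off the two twisted edges directly from the construction in Section~\ref{const} and classify each endpoint using the vertex/arc terminology fixed just before Observation~\ref{twistedge}. When $(\Q_n,\F_n)$ is built from two copies of $(\Q_{n-1},\F_{n-1})$, the matching between the copies agrees with the standard matching except for exactly two edges, namely
\[
01\bar0^{n-4}00 - 10\bar0^{n-4}11 \qquad\text{and}\qquad 10\bar0^{n-4}10 - 01\bar0^{n-4}01.
\]
Each of these has one endpoint whose bit string begins with $10$ and one whose bit string begins with $01$, so each twisted edge joins a $10$-vertex to a $01$-vertex; this already re-derives Observation~\ref{twistedge} for the newly added edges.

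Next I would show that the $01$-vertex endpoint of each twisted edge is isolated in $\F_n$. Concatenating the all-zero blocks gives $01\bar0^{n-4}00 = 01\bar0^{n-3}0$ and $01\bar0^{n-4}01 = 01\bar0^{n-3}1$, and by Lemma~\ref{arc_size}(iii) these two strings are precisely the isolated vertices of $\F_n$. Hence each twisted edge has an isolated endpoint, which is its $01$-vertex endpoint.

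Finally I would check that the two endpoints singled out are genuinely distinct, i.e.\ that the $10$-vertex endpoint is not itself isolated: by Lemma~\ref{arc_size}(i) every $10$-vertex $10a$ lies on the length-$2$ chain $00a\to 10a\to 11a$, so it has both an incoming and an outgoing arc and is never isolated. Therefore each of the two twisted edges introduced at this step joins a $10$-vertex to an isolated vertex, as claimed. The argument is essentially bookkeeping; the only points needing care are the reindexing identity $\bar0^{n-4}0 = \bar0^{n-3}$ and confirming that the base case $n=4$ — where $\bar0^{n-4}$ is the empty string and the twisted edges are $0100-1011$ and $1010-0101$ with isolated vertices $0100$ and $0101$ — is subsumed by the same description.
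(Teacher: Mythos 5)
Your verification is correct and is exactly the direct reading-off from the construction that the paper intends: the paper states this as an Observation without proof, and your identification of the $01$-endpoints $01\bar0^{n-4}00=01\bar0^{n-3}0$ and $01\bar0^{n-4}01=01\bar0^{n-3}1$ as the isolated vertices of Lemma~\ref{arc_size}(iii) (which is proved before and independently of this observation, so there is no circularity) is the intended justification. The extra check that the $10$-endpoints are non-isolated is harmless but not needed for the statement.
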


\begin{lem}
\label{num_arcs}
    $\F_n$ contains $2^{n-1}+2^{n-3}-1$ arcs.
\end{lem}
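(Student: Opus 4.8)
The plan is to prove $|\F_n| = 2^{n-1} + 2^{n-3} - 1$ by induction on $n$, using Lemma \ref{rel_num_arcs} as the recursive step. I will establish the base case $n = 3$ by direct count from the explicit description of $\F_3$, and then push the recursion through.

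\textbf{Base case.} For $n = 3$, the arc set $\F_3$ consists of the two chains $000 \rightarrow 100 \rightarrow 110$ and $001 \rightarrow 101 \rightarrow 111$, each contributing two arcs, together with the two isolated vertices $010$ and $011$. Hence $|\F_3| = 4$. On the other hand, $2^{3-1} + 2^{3-3} - 1 = 4 + 1 - 1 = 4$, so the formula holds.

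\textbf{Induction step.} Suppose $|\F_{n-1}| = 2^{n-2} + 2^{n-4} - 1$ for some $n \geq 4$. By Lemma \ref{rel_num_arcs}, $(\Q_n, \F_n)$ has $2|\F_{n-1}| + 1$ arcs. Substituting the inductive hypothesis,
\begin{align*}
    |\F_n| = 2\left(2^{n-2} + 2^{n-4} - 1\right) + 1 = 2^{n-1} + 2^{n-3} - 2 + 1 = 2^{n-1} + 2^{n-3} - 1,
\end{align*}
which is the claimed value. This completes the induction, and the lemma follows.

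This argument is essentially bookkeeping: there is no genuine obstacle, since Lemma \ref{rel_num_arcs} already isolates the only nontrivial point (that the bridge arc connects two isolated vertices, so exactly one arc is gained over the two disjoint copies). An alternative, non-inductive route would be to read off the count directly from Lemma \ref{arc_size}: there are $2^{n-2}$ chains of length 2 (one for each bit string $a$ of length $n-2$), contributing $2 \cdot 2^{n-2} = 2^{n-1}$ arcs; there are $\sum_{k=0}^{n-3} 2^{\,n-k-3}$ chains of length 1, contributing $\sum_{k=0}^{n-3} 2^{\,n-k-3} = 2^{n-3} + 2^{n-4} + \cdots + 2^0 = 2^{n-2} - 1$ arcs; wait — that gives $2^{n-1} + 2^{n-2} - 1$, which overcounts, so one must be careful that the length-1 chains are indexed so as to avoid double-counting across different $k$, and I would instead simply trust the clean recursive count. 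The inductive proof via Lemma \ref{rel_num_arcs} is the cleanest and is the one I would present.
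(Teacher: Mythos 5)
Your proof is correct and is essentially the paper's own argument: induction on $n$ with base case $|\F_3|=4$ and the recursion $|\F_n|=2|\F_{n-1}|+1$ supplied by Lemma \ref{rel_num_arcs}. The alternative count via Lemma \ref{arc_size} that you sketch and abandon is indeed not needed, and setting it aside in favour of the inductive bookkeeping matches the paper's presentation exactly.
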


\begin{proof}
    We prove this by induction on $n$. 
    
    {\bf Base Case:} Let $n=3$. Then $\F_n$ contains $4=2^2+2^0-1$ arcs. See Figure \ref{Q3zfs}.

    {\bf Induction Step:} Let $n>3$, and assume, by induction, that $|\F_{n-1}|=2^{n-2}+2^{n-4}-1$. By Lemma \ref{rel_num_arcs}, we have that $|\F_n|=2|F_{n-1}|+1$. Then
    \begin{align*}
        |\F_n|=&2|F_{n-1}|+1\\
            =&2(2^{n-2}+2^{n-4}-1)+1\\
            =&2^{n-1}+2^{n-3}-2+1\\
            =&2^{n-1}+2^{n-3}-1,
    \end{align*}
    as required.
\end{proof}

By Equation \ref{arc_to_zf}, we see that Lemma \ref{num_arcs} is equivalent to saying that $(\Q_n,\F_n)$ corresponds to an initial set of blue vertices of size $2^{n-1}-2^{n-3}+1$ in $\Q_n$.

Note that in our construction, we begin with only 00- and 10-arcs. We then copy these arcs and add a bridge arc, which is a 01-arc. So, at no point do we introduce a 11-arc, which leads to our next observation.

\begin{obs}
\label{no_11_arcs}
    No arc has a 11-vertex as a tail.
\end{obs}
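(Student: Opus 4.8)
The plan is to read this off directly from the classification of chains in Lemma \ref{arc_size}. That lemma shows that every arc of $\F_n$ lies in a chain of length one or two and enumerates all such chains. A chain of length two has the form $00a\rightarrow10a\rightarrow11a$, so its two arcs have tails $00a$ and $10a$, a 00-vertex and a 10-vertex respectively; the 11-vertex $11a$ appears only as a head. A chain of length one has the form $01\bar0^k0b\rightarrow01\bar0^k1b$, whose single arc has tail $01\bar0^k0b$, a 01-vertex. Hence the tail of any arc of $\F_n$ is a 00-, 10-, or 01-vertex, and in particular never an 11-vertex.

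Alternatively, one can argue directly by induction on the construction, which also makes precise the heuristic remark preceding the statement. In the base case, $\F_3$ consists of the arcs $000\rightarrow100$, $100\rightarrow110$, $001\rightarrow101$, and $101\rightarrow111$, whose tails $000,100,001,101$ are all 00- or 10-vertices. For the inductive step, $\F_n$ is assembled from two copies of $\F_{n-1}$, in which every vertex label has a single bit appended at the end, together with the bridge arc $01\bar0^{n-4}10\rightarrow01\bar0^{n-4}11$. Appending a bit at the end does not change the first two coordinates of a label, so by the induction hypothesis no arc inherited from either copy has an 11-vertex as its tail; and the bridge arc has tail $01\bar0^{n-4}10$, a 01-vertex. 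This closes the induction.

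There is essentially no obstacle here. The only point deserving a moment's attention is that the iterative construction in Section \ref{const} appends the new bit at the \emph{end} of each label, so the first two bits, which determine a vertex's class, are preserved under the copying operation; both arguments above rest on exactly this fact.
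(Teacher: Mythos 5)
Your proposal is correct, and your second (inductive) argument is exactly the reasoning the paper itself uses: the observation is justified there by the remark that the construction starts with only 00- and 10-arcs, copying appends a bit at the end (so the leading two bits, hence the arc classes, are unchanged), and the only arcs ever added are bridge arcs, which are 01-arcs. Your first argument, reading the claim off the chain classification in Lemma \ref{arc_size}, is an equally valid shortcut since every arc lies in one of the enumerated chains, whose tails are all 00-, 10-, or 01-vertices.
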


\section{Proof of Main Result}

\label{proof}

Recall from Definition \ref{ctdef} that a chain twist in $(G,\F)$ is a cycle, $C$, in $G$ that contains no consecutive non-arcs in $C$. We can alter this definition to not require the underlying set of vertices to form a cycle, but just a path.

\begin{defn}
\label{ctp}
    Given a graph $G$ and arc set $\F$, a {\it chain twist path} in $\F$ is a path, $v_1v_2\dots v_k$ with the property
    $$(v_i,v_{i+1})\notin\F\Rightarrow (v_{i-1},v_i)\in\F, \mbox{and } (v_{i+1},v_{i+2})\in\F,$$
    for all $1<i<k$.
\end{defn}

So a chain twist path is a path without consecutive non-arcs. It can be thought of as a section of a chain twist. We note an important consequence of this definition.

\begin{prop}
\label{ctp_iso}
    Let $(G,\F)$ be a graph $G$ and arc set $\F$. Let $P=v_1v_2\dots v_k$ be a path in $G$. If $v_i$, $2\leq i\leq k-1$, is an isolated vertex in $\F$, then $P$ is not a chain twist path. 
\end{prop}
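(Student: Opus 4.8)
The statement to prove is Proposition \ref{ctp_iso}: if $v_i$ (for $2 \leq i \leq k-1$) is an isolated vertex in $\F$, then $P = v_1 v_2 \dots v_k$ is not a chain twist path.

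The plan is to argue by contradiction using the defining property of a chain twist path applied at the internal vertex $v_i$. Since $v_i$ is isolated in $\F$, there are no arcs incident to $v_i$ at all — in particular, neither $(v_i, v_{i+1})$ nor $(v_{i-1}, v_i)$ lies in $\F$. Now apply Definition \ref{ctp} at index $i$: since $2 \leq i \leq k-1$, we have $1 < i < k$, so the chain twist path property must hold at $i$. Because $(v_i, v_{i+1}) \notin \F$, the property forces $(v_{i-1}, v_i) \in \F$. But $v_i$ is isolated, so $(v_{i-1}, v_i) \notin \F$, a contradiction. Hence $P$ is not a chain twist path.

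I would write this up in a few lines: suppose for contradiction that $P$ is a chain twist path and $v_i$ is isolated for some $2 \leq i \leq k-1$. Note $v_i$ being isolated means no arc of $\F$ has $v_i$ as an endpoint; in particular $(v_i, v_{i+1}) \notin \F$ and $(v_{i-1}, v_i) \notin \F$ (the indices $v_{i-1}$ and $v_{i+1}$ exist precisely because $1 < i < k$). Applying the implication in Definition \ref{ctp} with this index $i$, from $(v_i, v_{i+1}) \notin \F$ we conclude $(v_{i-1}, v_i) \in \F$, contradicting that $v_i$ is isolated.

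There is essentially no obstacle here — the result is a direct unwinding of definitions, and the only thing to be careful about is the index bookkeeping: one must check that the antecedent $(v_i,v_{i+1}) \notin \F$ is legitimately available (it is, since $v_i$ isolated implies \emph{every} potential arc at $v_i$ is absent) and that $i$ is a genuine interior index so that Definition \ref{ctp} applies and the neighbours $v_{i-1}, v_{i+1}$ are defined. A one-sentence remark that ``isolated in $\F$'' means no arc of $\F$ is incident to the vertex may be worth including for clarity, but otherwise the proof is immediate.
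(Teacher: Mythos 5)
Your proof is correct and is essentially the paper's argument: both apply the defining implication of a chain twist path at the interior index $i$ and contradict the isolation of $v_i$ (the paper phrases it via the contrapositive, starting from $(v_{i-1},v_i)\notin\F$ to force $(v_i,v_{i+1})\in\F$, while you use the implication directly — a trivial difference).
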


\begin{proof}
    Assume that $P$ is a chain twist path, and $v_i$ is isolated in $\F$ for some $2\leq i\leq k-1$. Since $v_i$ is isolated, there are no arcs into $v_i$, and so $v_{i-1}v_i\notin\F$. By Definition \ref{ctp}, this implies that $v_iv_{i+1}\in\F$, but this contradicts $v_i$ being isolated, completing the proof.  
\end{proof}

So we cannot include isolated vertices of an arc set in the interior of any chain twist path, and by extension, any chain twist.

\begin{cor}
    Let $(G,\F)$ be a graph $G$ and arc set $\F$. If $v_1v_2\dots v_kv_1$ is a chain twist, then $v_i$ is not isolated in $\F$, for all $i$. 
\end{cor}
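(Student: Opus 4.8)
The plan is to derive this directly from Proposition \ref{ctp_iso} by observing that every vertex of a chain twist can be placed in the interior of a chain twist path obtained by ``cutting'' the cycle open at a suitable edge.

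First I would fix an arbitrary index $j \in \{1,\dots,k\}$ and aim to show $v_j$ is not isolated in $\F$. Since the chain twist $v_1v_2\dots v_kv_1$ is a cycle, its vertex labelling is only determined up to cyclic rotation; relabelling if necessary, I may assume $2 \le j \le k-1$ (this uses $k \ge 3$, which holds since a cycle in a simple graph has at least three vertices). Now consider the path $P = v_1 v_2 \dots v_k$, which is exactly the cycle with the edge $v_k v_1$ deleted, hence a genuine path in $G$.

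Next I would verify that $P$ is a chain twist path in the sense of Definition \ref{ctp}. The defining implication of a chain twist, equation \eqref{chainprop}, holds for \emph{all} $1 \le i \le k$ (with indices taken mod $k$); in particular it holds for every $i$ with $1 < i < k$, and for those indices $i-1$, $i$, $i+1$, $i+2$ all lie in $\{1,\dots,k\}$ with no modular wraparound, so the implication is precisely the condition required of a chain twist path. Therefore $P$ satisfies Definition \ref{ctp}.

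Finally, since $2 \le j \le k-1$, the vertex $v_j$ lies in the interior of the chain twist path $P$, so Proposition \ref{ctp_iso} forces $v_j$ to be non-isolated in $\F$. As $j$ was arbitrary, every $v_i$ is non-isolated, proving the corollary. I do not anticipate a real obstacle here; the only point requiring a word of care is the cyclic relabelling that lets an arbitrary vertex of the cycle become an interior vertex of the cut-open path, together with the remark that the modular indexing in \eqref{chainprop} specializes cleanly to the non-modular indexing of Definition \ref{ctp} on the interior.
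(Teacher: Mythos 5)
Your proof is correct and is essentially the paper's own (implicit) argument: the corollary is presented as an immediate consequence of Proposition \ref{ctp_iso} ("by extension, any chain twist"), obtained exactly as you do by cyclically relabelling and cutting the cycle open so that the given vertex sits in the interior of a chain twist path. One small inaccuracy: at $i=k-1$ the index $i+2$ does wrap around modulo $k$, so your claim of "no modular wraparound" for all $1<i<k$ is not literally true; this is harmless, though, since the application of Proposition \ref{ctp_iso} only needs the conjunct $(v_{i-1},v_i)\in\F$, and the dangling reference to $v_{k+1}$ is a boundary quirk already present in the paper's Definition \ref{ctp}, whose intended reading is simply that the path has no two consecutive non-arcs.
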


In \cite{chaintwist}, it was shown that an arc set with a closed walk that satisfies the property (\ref{chainprop}) from Definition \ref{ctdef} can be reduced to a chain twist. We will use this fact in the following results.

\begin{thm}
\label{chainwalk}
    \cite{chaintwist} Let $G$ be a graph with arc set $\F$. If $G$ contains a closed walk that satisfies the property (\ref{chainprop}), then $\F$ contains a chain twist.
\end{thm}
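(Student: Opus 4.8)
The plan is to take an arbitrary closed walk satisfying property~(\ref{chainprop}) and reduce it, by local surgeries, to a genuine cycle that still satisfies~(\ref{chainprop}); by Definition~\ref{ctdef} such a cycle is exactly a chain twist. I would induct on the length of the closed walk. Write $W=v_1v_2\cdots v_mv_1$ for a shortest closed walk satisfying~(\ref{chainprop}); it suffices to show $W$ is a simple cycle, for which I will argue that if $W$ is not simple then a strictly shorter closed walk satisfying~(\ref{chainprop}) exists, contradicting minimality.

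There are two ways $W$ can fail to be simple, and each yields a shorter walk. \textbf{Immediate backtracks.} Suppose $v_{i-1}=v_{i+1}$ for some $i$ (indices mod $m$). The consecutive steps $v_{i-1}\to v_i$ and $v_i\to v_{i+1}$ use one edge in both directions, so by the antisymmetry in Definition~\ref{arc_set} at most one of them lies in $\F$; since~(\ref{chainprop}) forbids two consecutive non-arcs, exactly one is an arc and, applying~(\ref{chainprop}) at the non-arc step, the step immediately on its far side is also an arc. Deleting the two backtracking steps produces a closed walk shorter by $2$ whose single new pair of consecutive steps contains that surviving arc, so~(\ref{chainprop}) persists. \textbf{Repeated vertices.} If $W$ has no backtrack but is not simple, pick a repeated vertex $v_i=v_j$, $i<j$, with $j-i$ minimal; minimality together with the absence of backtracks forces $j-i\ge 3$, so $C=v_iv_{i+1}\cdots v_{j-1}v_i$ is a genuine simple cycle and $W_2=v_1\cdots v_iv_{j+1}\cdots v_mv_1$ is a closed walk of length $m-(j-i)$. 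The key case split is on the step $v_i\to v_{i+1}$: if $(v_i,v_{i+1})\in\F$, then at the junction $v_i$ of $C$ one of the two consecutive steps is an arc, so $C$ already satisfies~(\ref{chainprop}) and is a chain twist; if $(v_i,v_{i+1})\notin\F$, then~(\ref{chainprop}) applied to $W$ at position $i$ gives $(v_{i-1},v_i)\in\F$, and this arc lies exactly at the new junction $v_i$ of $W_2$ (the step $v_i\to v_{j+1}$ being the old step $v_j\to v_{j+1}$), so $W_2$ satisfies~(\ref{chainprop}) and is strictly shorter than $W$. Either branch contradicts minimality, so $W$ is a simple cycle.

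I expect the main obstacle to be bookkeeping around short and degenerate walks: one has to verify that these reductions never drop the length below $3$ — using that a closed walk in a simple graph has length at least $2$, that a length-$2$ closed walk is itself a backtrack, and that a gap $j-i\in\{m-1,m-2\}$ in the repeated-vertex case would itself create a backtrack — and one must either assume or arrange that the original walk is ``essential'', i.e.\ does not collapse to a trivial walk under repeated backtrack removal (a walk that does collapse traverses no cycle, so this is implicit in the statement). Terminating the induction is then immediate since each surgery strictly decreases length, and the base case is a no-backtrack closed walk of length $3$, which is a simple triangle. As a consistency check in the spirit of Proposition~\ref{ctp_iso}, the chain twist obtained has no $\F$-isolated vertex automatically: by~(\ref{chainprop}) every vertex of a simple cycle satisfying~(\ref{chainprop}) is incident to an arc of that cycle. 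Since~(\ref{chainprop}) is a local condition, only the spliced junctions need rechecking at each step, so the remaining verifications are routine. Alternatively the same argument can be written directly as induction on $m$ without the minimal-counterexample wrapper.
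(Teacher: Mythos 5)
You should note first that the paper does not actually prove Theorem \ref{chainwalk}: it is imported from \cite{chaintwist}, so there is no in-paper proof to measure against, and your argument has to stand on its own. The cyclic-reduction strategy and the local junction checks are the right idea, and most of them are correct: the equivalence of (\ref{chainprop}) with ``no two consecutive non-arc steps'' does make both surgeries local, the backtrack deletion is handled correctly, and the bound $j-i\ge 3$ (hence $m-(j-i)\ge 3$) in the backtrack-free case is right. The genuine gap is in how the degenerate walks are handled. The ``essential walk'' issue you relegate to an aside is not bookkeeping and is not ``implicit in the statement'': with the literal reading, take $G=K_2$ and $\F=\{(u,v)\}$; the closed walk $u,v,u$ satisfies (\ref{chainprop}) (indices mod $2$), yet $\F$ is a forcing arc set and so contains no chain twist by Theorem \ref{ctthm}. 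Consequently your opening move, ``let $W$ be a shortest closed walk satisfying (\ref{chainprop}),'' is already broken, since the shortest such walk is typically exactly this kind of two-step backtrack and is not a simple cycle; the minimum must be taken over walks that do not collapse under repeated backtrack removal, and that restriction has to be threaded through the induction, not assumed about ``the original walk'' only.

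Once you do restrict to essential walks, your repeated-vertex branch with $(v_i,v_{i+1})\notin\F$ no longer ``contradicts minimality'' as written: $W_2$ is shorter and satisfies (\ref{chainprop}), but you never show it is essential, and the splice can create a backtrack (nothing prevents $v_{j+1}=v_{i-1}$), after which $W_2$ may collapse entirely and is useless against a minimum taken over essential walks. The fix is local but needs to be said: split also on the last step of $C$. If $(v_{j-1},v_j)\in\F$, then the wrap-around pair of $C$ contains an arc and $C$ itself is a chain twist, so you are done without $W_2$. If $(v_{j-1},v_j)\notin\F$, then (\ref{chainprop}) applied at positions $i$ and $j-1$ of $W$ makes both junction steps $(v_{i-1},v_i)$ and $(v_j,v_{j+1})$ of $W_2$ arcs; a backtrack at the junction would then put both orientations of the edge $v_{i-1}v_i$ into $\F$, contradicting Definition \ref{arc_set}, so $W_2$ inherits backtrack-freeness from $W$, is nontrivial of length at least $3$, hence essential, and minimality applies. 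Together with the easy observation that deleting a backtrack preserves essentiality (any collapsing sequence for the shortened walk extends to one for $W$), this closes the induction; without these additions, the step ``either branch contradicts minimality'' fails.
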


It was shown in \cite{minrank} that the zero forcing number of a Cartesian product is bounded above by the zero forcing number and size of the factors. We will offer a new proof of this known result in the context of forcing arc sets as it demonstrates a technique used in further results. 

\begin{prop}
\label{cartprod}
    \cite{minrank} Let $G=(V,E)$ be a graph with a forcing arc set $\F$, and $H=(U,E')$ a graph. Then $G\square H$ has a forcing arc set of size $|\F||H|$.
\end{prop}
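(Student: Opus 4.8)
The plan is to lift the forcing arc set $\F$ of $G$ to $G\square H$ in the most natural way: for each vertex $x\in U$, take a copy of $\F$ on the layer $V\times\{x\}$, and use Theorem~\ref{ctthm} to verify that the resulting arc set contains no chain twist. Concretely, define $\F'=\{((u,x),(v,x)) : (u,v)\in\F,\ x\in U\}$. Each such arc is indeed an edge of $G\square H$ (since $uv\in E$ and the second coordinate is fixed), and $(v,x)(u,x)\notin\F'$ because $(v,u)\notin\F$, so $\F'$ is a valid arc set. Its size is $|\F||H|$ as claimed, since the layers are disjoint. The content of the proposition is that $\F'$ is a \emph{forcing} arc set, which by Theorem~\ref{ctthm} is equivalent to the absence of a chain twist.

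So the key step is to show $\F'$ has no chain twist. Suppose for contradiction that $C=w_1w_2\dots w_kw_1$ is a chain twist in $(G\square H,\F')$, with $w_i=(v_i,x_i)$. I would first argue that $C$ must stay within a single layer. The point is that $\F'$ contains no arc between distinct layers: every arc of $\F'$ keeps the $H$-coordinate constant. Hence if $C$ ever changes layers, say $x_i\neq x_{i+1}$, then the edge $w_iw_{i+1}$ is an $H$-edge and is a non-arc of $\F'$. By the chain twist property~(\ref{chainprop}), a non-arc at position $i$ forces $(w_{i-1},w_i)\in\F'$ and $(w_{i+1},w_{i+2})\in\F'$; but $(w_{i-1},w_i)\in\F'$ is an arc, hence lies within a layer, so $x_{i-1}=x_i$, and similarly $x_{i+1}=x_{i+2}$. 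Thus layer-changes along $C$ are forced to be ``isolated'' non-arcs flanked by in-layer arcs. The cleanest way to finish is to observe that around the cycle the $H$-coordinate must return to its start, so the number of layer-changes is even and at least two; pick two consecutive layer-changing edges and trace the in-layer segment between them. On that segment, every edge must be an arc of $\F'$ (any in-layer non-arc $w_j w_{j+1}$ would, by~(\ref{chainprop}), require $(w_{j+1},w_{j+2})\in\F'$, continuing the in-layer arc-chain, which is fine, but a non-arc immediately preceding a layer-change creates two consecutive non-arcs — contradiction). This confines the analysis and ultimately shows the projection of $C$ to $G$ yields a closed walk satisfying~(\ref{chainprop}) in a single layer, hence in $(G,\F)$.

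More precisely, I would set up the argument to produce a closed walk in $G$ witnessing property~(\ref{chainprop}), and then invoke Theorem~\ref{chainwalk} rather than trying to get an honest cycle. Project $C$ to its first coordinate to get a closed walk $v_1v_2\dots v_kv_1$ in $G$ (collapsing the $H$-edges, which become stationary steps $v_i=v_{i+1}$, or handling them via the flanking-arc structure above). A stationary step is not an edge of $G$, so one must actually contract each maximal run corresponding to a layer-change; but by the flanking condition each layer-changing non-arc is surrounded by in-layer arcs with the same $G$-projection behaviour, so contracting it leaves a closed walk in $G$ in which the property~(\ref{chainprop}) still holds: an edge of the contracted walk is a non-arc of $\F$ iff the corresponding edge of $C$ was an in-layer non-arc of $\F'$, and its neighbours in the contracted walk project the neighbouring arcs of $C$. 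Then Theorem~\ref{chainwalk} gives a chain twist in $(G,\F)$, contradicting the hypothesis that $\F$ is a forcing arc set (via Theorem~\ref{ctthm}).

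The main obstacle I anticipate is the bookkeeping around the layer-changing edges: making rigorous the claim that a chain twist in $G\square H$ cannot ``use'' the product edges in an essential way, and that contracting them yields a genuine closed walk in $G$ that still satisfies~(\ref{chainprop}). One has to be careful that contracting a layer-change does not accidentally create a zero-length walk or merge two non-arcs into consecutive non-arcs; the flanking-arc condition from~(\ref{chainprop}) is exactly what prevents this, but spelling it out cleanly is where the real work lies. An alternative, possibly slicker route is to avoid projection entirely: argue directly that within the single layer that $C$ is (eventually shown to be) confined to, $C$ is literally a chain twist of the copy of $\F$ on that layer, which is isomorphic to $(G,\F)$, immediately contradicting Theorem~\ref{ctthm}. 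I would try that streamlined version first and fall back on the closed-walk/Theorem~\ref{chainwalk} argument only if the confinement step turns out to need the walk formulation.
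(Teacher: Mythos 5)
Your main argument is essentially the paper's proof: lift $\F$ layerwise to $\F^\square$, suppose a chain twist $C$ exists, use property (\ref{chainprop}) to see that every $H$-edge of $C$ is a non-arc flanked by in-layer arcs, project $C$ to its first coordinate, contract the resulting consecutive repeats to obtain a closed walk in $G$ still satisfying (\ref{chainprop}), and invoke Theorem \ref{chainwalk} to contradict the hypothesis via Theorem \ref{ctthm}. Two of your side remarks are shakier --- the claim that every edge of an in-layer segment between consecutive layer-changes must be an arc is false (in-layer non-arcs flanked by arcs are permitted), and the ``slicker'' route of confining $C$ to a single layer does not go through directly --- but neither is load-bearing, since the projection/contraction argument you commit to as the fallback is exactly the paper's.
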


\begin{proof}
    Consider $G\square H$, and the arc set $\F^\square$ formed as follows
    $$(v_i,u_r)(v_j,u_s)\in\F^\square\iff u_r=u_s\text{ and }v_iv_j\in\F.$$
    So $\F^\square$ consists of a copy of $\F$ in each copy of $G$ in $G\square H$. Now, for the sake of contradiction, assume that $\F^\square$ has a chain twist $C$. 
    
    If $C$ includes no edge of the form $(v,u_r)(v,u_s)$, then $C$ is contained in a single copy of $G$. Therefore, $\F$ contains a chain twist which is a contradiction. 

    So $C$ must include edges of the form $(v_j,u_r)(v_j,u_s)$. We know, by the construction of $\F^\square$, that none of these edges are arcs. Then Definition \ref{ctdef} tells us that these edges must be preceded and followed by arcs, which exist only in the copies of $G$. Consider the vertices of $C$ and form a new sequence $W=v_1,v_2,\dots,v_k,v_1$ that consists of the first element of each vertex in $C$, where some vertices will be listed consecutively. Note that the same vertex will never be listed three times consecutively, as this equates to traversing two edges from $H$ in a row which contradicts $C$ being a chain twist. By Definition \ref{ctdef} and the construction of $\F^\square$, whenever $\dots v_{i-1},v_i,v_i,v_{i+1}\dots$ appears in $W$, $v_{i-1}v_i$ and $v_iv_{i+1}$ are both arcs in $\F$. This means that removing these repeated vertices will result in a new sequence that satisfies property (\ref{chainprop}). We now form the sequence $W'$, which is $W$ but whenever the same vertex is listed consecutively, we remove the second instance of that vertex. The sequence $W'$ is a closed walk in $G$. Since every edge in $W'$ corresponds to an edge in $C$, $W'$ is a closed chain twist walk in $G$. By Theorem \ref{chainwalk}, $W'$ can be reduced to a chain twist, and so $\F$ must contain a chain twist, contradicting our assumption.

    So $(G\square H,\F^\square)$ cannot contain a chain twist, and $\F^\square$ is a forcing arc set. 
\end{proof}

So if we have a Cartesian product of graphs and a set of arcs repeated in copies of one of the factors, any chain twist can be projected onto the factor. This result can be applied immediately to $(\Q_n,\F_n)$ to tell us information about any chain twists, should they exist.

\begin{prop}
\label{chainarc}
    Let $\Q_n$, $n\geq4$, be a minority cube with arc set $\F_n$. If $\F_n$ has a chain twist, $C$, and $\F_{n-1}$ does not, then $C$ contains the bridge arc.
\end{prop}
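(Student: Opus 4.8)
The plan is to assume that $\F_n$ has a chain twist $C$ but $\F_{n-1}$ does not, and derive a contradiction unless $C$ uses the bridge arc. Recall that $\Q_n$ is built from two copies of $\Q_{n-1}$ — call them $\Q_{n-1}^0$ and $\Q_{n-1}^1$ according to the appended bit — joined by a matching $M$ that is standard except for the two twisted edges $01\bar0^{n-4}00 - 10\bar0^{n-4}11$ and $10\bar0^{n-4}10 - 01\bar0^{n-4}01$, and that $\F_n$ is the union of the two copies of $\F_{n-1}$ together with the single bridge arc $01\bar0^{n-4}10 \to 01\bar0^{n-4}11$. The key structural point is that \emph{every} arc of $\F_n$ other than the bridge arc lies entirely within one of the two copies; only matching edges (none of which are arcs, except the bridge) cross between the copies.

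First I would suppose for contradiction that $C$ does not contain the bridge arc. Then every arc of $C$ lies inside $\Q_{n-1}^0$ or $\Q_{n-1}^1$, and every edge of $C$ crossing between the copies is a non-arc matching edge. If $C$ stays entirely inside one copy, then it is a chain twist in that copy's $\F_{n-1}$, contradicting the hypothesis that $\F_{n-1}$ has no chain twist. So $C$ must cross between the copies, and hence uses at least two matching edges (a cycle crossing a bipartition crosses an even, positive number of times), both of them non-arcs. The strategy is then to mimic the projection argument of Proposition \ref{cartprod}: since $M$ is a perfect matching, each crossing non-arc edge $v - v'$ joins a vertex to its twin, so I can "fold" $C$ onto, say, the first copy by replacing each maximal sub-path of $C$ lying in $\Q_{n-1}^1$ with the corresponding sub-path in $\Q_{n-1}^0$ obtained by applying the twin map. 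For the standard matching edges this fold is an isometry of the relevant sub-paths and sends arcs to arcs and non-arcs to non-arcs; the only places where the fold can fail to respect adjacency are at the two twisted edges. So the main case analysis is: either $C$ avoids the twisted edges entirely, in which case the folded walk $W$ is a closed chain twist walk in $\Q_{n-1}^0$ (by Definition \ref{ctp}, each crossing non-arc in $C$ is flanked by arcs, and these arcs survive the fold, so $W$ satisfies property (\ref{chainprop})), and Theorem \ref{chainwalk} produces a chain twist in $\F_{n-1}$ — contradiction; or $C$ uses one or both twisted edges.

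The hard part will be handling the twisted edges. Here I would use Observations \ref{twistedge} and \ref{newtwist}: each twisted edge is incident to a 10-vertex and to a vertex that is isolated in $\F_n$ (namely a copy of $01\bar0^{n-3}0$). By Proposition \ref{ctp_iso}, an isolated vertex cannot appear in the interior of a chain-twist path, so in the cycle $C$ the isolated endpoint of a twisted edge, if present, must in fact have its two incident $C$-edges flank it in a way forced by (\ref{chainprop}) — but an isolated vertex has no incident arcs, so both its $C$-edges are non-arcs, and then property (\ref{chainprop}) applied at that vertex is violated (the predecessor-to-vertex edge is a non-arc yet is not preceded by an arc at the vertex, since none exist). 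This shows $C$ cannot pass through either twisted edge at all, because doing so would force an isolated vertex into the interior of the cycle. Hence $C$ uses only standard matching edges, the fold goes through cleanly, and we reach the contradiction above. Therefore $C$ must contain the bridge arc, as claimed. The only genuinely delicate bookkeeping is verifying that the folded closed walk $W$ still satisfies (\ref{chainprop}) at the folding seams — i.e. that consecutive non-arcs are never created by the fold — which follows because each crossing edge of $C$ is a non-arc flanked on both sides (within a copy) by arcs of $\F_{n-1}$, and these flanking arcs are exactly preserved by the twin map on the non-twisted part of the matching.
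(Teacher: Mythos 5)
Your proposal is correct and follows essentially the same route as the paper's proof: rule out twisted matching edges because their isolated endpoints cannot lie on a chain twist (Proposition \ref{ctp_iso} and its corollary), then project the remaining crossings onto a single copy of $\Q_{n-1}$ (your ``fold'' by the last-bit map is the paper's deletion of the final bit plus merging of repeated vertices) to obtain a closed chain twist walk, and invoke Theorem \ref{chainwalk} to contradict the hypothesis on $\F_{n-1}$. The only cosmetic difference is that the paper projects the whole cycle at once rather than folding copy-by-copy, and it makes the removal of duplicated seam vertices explicit.
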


\begin{proof}
    Let $C$ be a chain twist in $\F_n$. Assume that the bridge arc is not in $C$. If $C$ contains no matching edges, then $C$ is contained in one copy of $(\Q_{n-1},\F_{n-1})$. But this contradicts $\F_{n-1}$ not having a chain twist. So $C$ must contain matching edges. All available matching edges are not arcs, so $C$ is of the form $v_0,v_1,\cdots, x, y,\cdots,v_{k-1},v_0$, where $x$ and $y$ are twin vertices connected by an edge, and $v_{k-1}\rightarrow v_0$. Assume $x-y$ is a twisted edge. Then one of $x$ or $y$ is an isolated vertex in $\F_n$, and $C$ cannot be a chain twist. Therefore $x-y$ cannot be a twisted edge. Now we consider a new sequence $C'=v_0',v_1',\cdots, x', y',\cdots,v_{k-1}',v_0'$, where $v_i'$ is the vertex $v_i$ with the final bit in the binary expansion removed. This means that $x'=y'$ as $x$ and $y$ are twin vertices and do not share a twisted edge. At the point where $C$ crosses the matching edge, $xy$, $C'$ has a repeated element in its sequence. Define a new sequence $W$ to be $C'$ where we remove the second instance of each repeated element. Then $W$ is a closed walk in $\Q_{n-1}$. As we have only removed edges that are not arcs $\F_n$, $W$ is a closed chain twist walk in $\F_{n-1}$. By Theorem \ref{chainwalk}, this can be reduced to a chain twist. This gives us a chain twist in $\F_{n-1}$ contradicting our assumption.

    Therefore, $C$ must contain the bridge arc, as required.
\end{proof}

Knowing that any chain twist must not only cross over the matching, but must do so at the bridge arc, greatly limits the possible chain twists we must consider in our twisted hypercubes. The following lemma demonstrates the scope of this restriction.

\begin{lem}
\label{ctp_lemma}
    Let $\Q_n$, $n\geq4$, be a minority cube with forcing arc set $\F_n$. Then any chain twist path that begins at the bridge arc, $a_b$, and crosses the matching contains at most one arc after the matching edge.
\end{lem}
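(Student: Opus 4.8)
The plan is to trace a chain twist path starting at the bridge arc $a_b = 01\bar0^{n-4}10 \to 01\bar0^{n-4}11$, cross the matching, and show that after crossing we are forced into a vertex from which no admissible continuation exists beyond one more arc. The key structural inputs are: (1) the bridge arc connects two copies of $(\Q_{n-1},\F_{n-1})$, so after the arc $a_b$ we are at the $01$-vertex $01\bar0^{n-4}11$ in (say) the "$1$"-copy; (2) by Lemma~\ref{arc_size}, the chains of $\F_n$ are extremely restricted — length-$2$ chains are exactly $00a\to10a\to11a$, length-$1$ chains are exactly the $01$-arcs $01\bar0^k0b\to01\bar0^k1b$, and the only isolated vertices are $01\bar0^{n-3}0$ and $01\bar0^{n-3}1$; (3) by Observation~\ref{no_11_arcs}, no arc leaves an $11$-vertex; and (4) by Proposition~\ref{ctp_iso}, an isolated vertex cannot sit in the interior of a chain twist path. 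First I would set up the chain twist path as $v_1 v_2 \dots$ with $v_1 v_2 = a_b$, identify which matching edge the path uses to cross (it must be a standard matching edge or a twisted edge, and I would handle each case), and record the bit-string of the first vertex reached on the far side.

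Next I would do the case analysis on the crossing. If the path crosses via the standard matching edge incident to $v_2 = 01\bar0^{n-4}11$, it arrives at $01\bar0^{n-4}10$ (an $01$-vertex in the other copy); if it crosses at one of the two twisted edges $01\bar0^{n-4}00 - 10\bar0^{n-4}11$ or $10\bar0^{n-4}10 - 01\bar0^{n-4}01$, I would note (Observation~\ref{newtwist}) that a twisted edge is incident to an isolated vertex in $\F_n$, and by Proposition~\ref{ctp_iso} that isolated vertex cannot be interior — so the path can cross a twisted edge only if the isolated endpoint is the last vertex of the path, giving at most zero arcs after the matching in that subcase. So the substantive case is a standard-matching crossing. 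After such a crossing, I would examine the vertex $v_{j+1}$ reached on the far side and ask which arcs of $\F_n$ are incident to it: by Lemma~\ref{arc_size} the only possibilities for an arc leaving $v_{j+1}$ are a $00$-arc, a $10$-arc, or a $01$-arc, and I would show that in each case the forced next step either lands on an $11$-vertex (dead end for outgoing arcs by Observation~\ref{no_11_arcs}, so the chain-twist property~\eqref{chainprop} fails to continue), lands on an isolated vertex (Proposition~\ref{ctp_iso}), or would force another matching crossing, which the chain-twist condition does not permit without an intervening arc on each side.

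The main obstacle, and where I would spend the most care, is ruling out the continuation precisely: Definition~\ref{ctp} says a non-arc edge must be both preceded and followed by arcs, so after the single post-matching arc the path would need another edge; I must show that edge cannot be extended into a valid chain twist path, i.e. that whatever vertex we are at after "one arc past the matching," every neighbour leads to an immediate violation of~\eqref{chainprop} (two consecutive non-arcs, or an isolated interior vertex, or an $11$-vertex needing to emit an arc). This amounts to a bounded check over the neighbourhood of a handful of explicitly-named vertices of the form $01\bar0^{n-4}1\ast$ and $00\bar0^{n-4}1\ast$, using only the classification in Lemma~\ref{arc_size} and Observations~\ref{twistedge}, \ref{newtwist}, \ref{no_11_arcs}; the twisted edges only matter near the $01$/$10$ vertices with prefix $\bar0^{n-4}$, which keeps the case list finite and independent of $n$. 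Assembling these cases gives that no chain twist path from $a_b$ across the matching can contain two or more arcs after the matching edge, which is the claim.
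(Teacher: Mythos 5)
There is a genuine gap in your proposal: you localize the matching crossing to a ``handful of explicitly-named vertices'' near the bridge arc (of the form $01\bar0^{n-4}1\ast$, $00\bar0^{n-4}1\ast$), and even suggest the path crosses ``via the standard matching edge incident to $v_2=01\bar0^{n-4}11$'' --- but that edge \emph{is} the bridge arc itself, so a path cannot re-traverse it. More importantly, nothing confines the crossing to a neighbourhood of $a_b$: a chain twist path may alternate arcs and non-arc edges inside one copy of $\Q_{n-1}$ for arbitrarily long before crossing the matching at \emph{any} of the exponentially many matching edges. So your claim that the verification reduces to a finite, $n$-independent check around the bridge arc does not hold, and the substantive case of the lemma is exactly the one your sketch does not reach: a crossing at an arbitrary standard matching edge $e=x\mbox{--}x'$ far from $a_b$.

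The way the paper handles this is to note that since $e$ is a non-arc, Definition \ref{ctp} forces an arc $y\rightarrow x$ immediately before $e$, and then to case on the class of that arc (10-, 01-, or 00-arc), which covers all possible crossing locations at once. The 10- and 01-cases die immediately (the twin $x'$ is a 11-vertex with no out-arc by Observation \ref{no_11_arcs}, or the head of another 01-arc). The delicate case is a 00-arc $00c\rightarrow 10c$ before $e$: there one needs the additional structural fact that a 00-vertex can only be entered in a chain twist path from the head of a 01-arc (no arcs point into 00-vertices), which forces $c=\bar0^k0b$; then the twin $x'$ is either an isolated vertex (Observation \ref{newtwist}, Proposition \ref{ctp_iso}) or the tail of a single 10-arc whose head $11\bar0^k0b'$ has only 11-vertex neighbours and the arc-head $01\bar0^k0b'$, none of which can emit an arc --- this is precisely the case that produces the ``at most one arc'' in the statement, and it is absent from your case analysis. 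Without this classification by the arc preceding the crossing (and the 00-vertex entry argument), your proposal does not establish the lemma.
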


\begin{proof}
    Let $P$ be a chain twist path that begins at $a_b$ and crosses the matching at another edge, $e=x-x'$. Note that $e$ cannot be an arc, since, by construction, the bridge arc is the only arc crossing the matching. If $e$ is a twisted edge, then by Observation \ref{newtwist}, $e$ is incident to an isolated vertex in $\F_n$. Since $e$ is not an arc but is part of a chain twist path, $x$ must be the endpoint of an arc. Thus $x'$ must be an isolated vertex in $\F_n$. Therefore, $P$ cannot contain any arc after $e$.
    
    Next, assume that $e$ is not a twisted edge. So $x$ and $x'$ are twins that differ in only the final bit. By Definition \ref{ctp}, since $e$ is not an arc, it must be preceded by an arc. Let $y$ be the tail of the arc into $x$. We will consider all possible arcs before $e$ in $P$ by the classes of arcs separately. 

    Case I: Assume that $y\rightarrow x$ is a 10-arc. By Lemma \ref{arc_size}, $x$ is a 11-vertex. Therefore, $x'$ is also a 11-vertex. By Observation \ref{no_11_arcs}, there are no arcs with $x'$ as a tail, and so there are no arcs after $e$ in $P$.

    Case II: Assume that $y\rightarrow x$ is a 01-arc. 
    Then, by Lemma \ref{arc_size}, this arc is of the form $01\bar0^k0b\rightarrow 01\bar0^k1b$, where $0\leq k\leq n-3$ and $b$ is any string of length $n-k-3$. 
    Note that when $k=n-2$, then $y\rightarrow x$ is the bridge arc $01\bar00\rightarrow 01\bar01$. Then $y=x'=01\bar01$, but if $y\rightarrow x$ is an arc in $\F_n$, then we cannot use $x-y$ as an edge in the chain twist path. 
    
    So we consider the case when $k\leq n-1$. Then  $x=01\bar0^k1b$,
    where $b$ is not empty. 
    Since $x'$ is the twin of $x$,  $x'=01\bar0^k1b' $, where $b$ and $b'$ differ only in the last bit. 
    By Lemma \ref{arc_size}, this implies that $x'$  
    is the head of another 01-arc and has no arc out. So, there are no arcs after $e$ in $P$.

    Case III: Assume that $y\rightarrow x$ is a 00-arc. So for some $c\in\{0,1\}^{n-2}$, $y=00c$ and $x=10c$. 

    The first property to note is that in $P$, $y$ can only be reached from the head of a 01-arc. Namely, let $z$ be the vertex preceding $y$ in $P$. All neighbours of $y$ in $\Q_n$ are either 00-vertices, $10c=x$, and $01c$, by Observation \ref{twistedge}. Since $P$ is a path, $z\not= x$. If $z$ is a 00-vertex, then $z-y$ is an edge. As none of the 00-vertices are the head of an arc, the only possible chain twist path to $y$ is through $z=01c$. Therefore $z-y$ is an edge, so this requires $01c$ to be the head of an arc, by Definition \ref{ctp}, as claimed. 
    So for some $k\leq n-3$ and $b\in \{ 0,1\}^{n-k-3}$, $c=\bar{0}^k0b$. 

    If $k=n-3$, then $y\rightarrow x$ is $00\bar0^k\rightarrow10\bar0^k1$. By Observation \ref{newtwist}, the twin $x'=10\bar{0}^k0$ of $x=10\bar0^k1$ is an isolated vertex in $\F_n$. Therefore, by Proposition \ref{ctp_iso}, there are no further arcs following $e$ in $P$.

    If $k<n-3$, then $y\rightarrow x$  is $00\bar{0}^k0b\rightarrow10\bar{0}^k0b$. By Observation \ref{newtwist}, the twin to $x=10\bar{0}^k0b$ is $10\bar{0}^k0b'$, where $b$ and $b'$ are non-empty bit strings that differ only in the last bit. So $x'=10\bar{0}^k0b'$ is the tail of a 10-arc. The path $P$ can follow this arc to $11\bar{0}^k0b'$. By Observation \ref{twistedge}, all neighbours of $11\bar{0}^k0b'$ are 11-vertices, or the vertex $01\bar{0}^k0b'$. However, 11-vertices do not have out-arcs, and $01\bar{0}^k0b'$ is the head of a 01-arc, and therefore has no arcs out. See Figure \ref{biglemdemo} for a visualization. Therefore, $P$ cannot be extended beyond the 10-arc starting at $x'$.
     
    This shows that if $y\rightarrow x$ a 00-arc, then $P$ has at most one arc after $e$.
    
    These three cases are all possible constructions of a chain twist path of this form in $(\Q_n,\F_n)$. Each of these chain twist paths contain at most one arc following $e$ in $P$, completing the proof.
\end{proof}

\begin{figure}
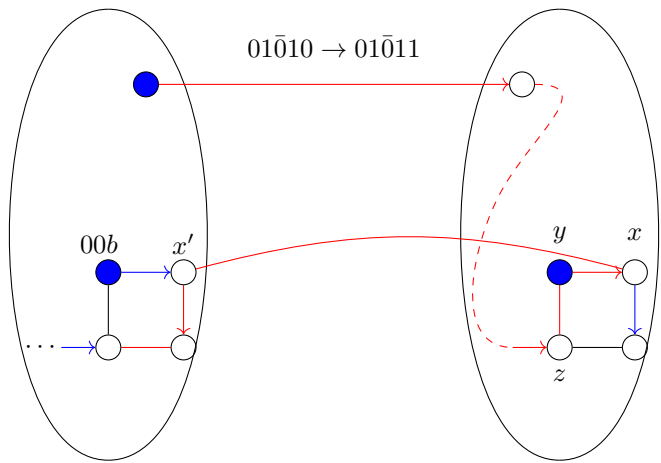

    \centering
    \ctikzfig{thqnoct}
    \caption{If $P$, indicated in red, crosses the matching after a 00-arc, then there is at most one arc in $P$ after this crossing.}
    \label{biglemdemo}
\end{figure}

As any chain twist path in $(\Q_n,\F_n)$ necessarily ends, this implies that there are no possible chain twists that exist with $a_b$.

\begin{cor}
\label{no_bridge}
    If a minority cube contains a chain twist, $C$, then $C$ does not contain the bridge arc.
\end{cor}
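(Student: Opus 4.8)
The plan is to argue by contradiction, combining Lemma \ref{ctp_lemma} with the fact that the bridge arc $a_b$ is the \emph{only} arc of $\F_n$ crossing the matching between the two copies of $(\Q_{n-1},\F_{n-1})$. Suppose some minority cube $\Q_n$ ($n\geq4$) contains a chain twist $C=v_1v_2\cdots v_kv_1$ with $(v_1,v_2)=a_b$, so that $v_1$ and $v_2$ lie in opposite copies of $\Q_{n-1}$. Since $C$ is a closed cycle it must return to the copy containing $v_1$, and $a_b$ is the unique arc crossing the matching, so $C$ crosses back along some non-arc matching edge $e=v_jv_{j+1}$. Then $P:=v_1v_2\cdots v_k$ is a chain twist path that begins at $a_b$ and crosses the matching at $e$, so Lemma \ref{ctp_lemma} applies: at most one edge of $P$ lying after $e$ is an arc.

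Next I would show that $C$ must close up almost immediately after $e$. Because $a_b$ is a chain of length one (Lemma \ref{arc_size}), its tail $v_1$ has no incoming arc, so the closing edge $v_kv_1$ is not an arc; property (\ref{chainprop}) applied at this edge forces $(v_{k-1},v_k)\in\F$, which already rules out $k=j+1$ (it would make the non-arc $e$ an arc). Since $e\notin\F$, property (\ref{chainprop}) applied at $e$ forces $(v_{j+1},v_{j+2})\in\F$. If $k\geq j+3$ these are two distinct arcs of $P$ after $e$, contradicting Lemma \ref{ctp_lemma}. Hence $k=j+2$, and $C$ consists of $a_b$, then a path inside the far copy from $v_2$ to $v_j$, then the non-arc $e=v_jv_{j+1}$, then the arc $f=(v_{j+1},v_{j+2})$, then the non-arc $v_{j+2}v_1$ returning to the tail of $a_b$.

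To finish I would revisit the case analysis inside the proof of Lemma \ref{ctp_lemma}. If $e$ is a twisted edge, then by Observation \ref{newtwist} one of $v_j,v_{j+1}$ is isolated in $\F_n$, giving an isolated vertex in the interior of $C$, which is impossible. If $e$ is not twisted, the only case in that proof that permits an arc immediately after $e$ is Case III: the arc into $v_j$ is a $00$-arc $00c\to10c$, $v_{j+1}$ is the standard twin $10c'$ of $v_j=10c$, and $f$ is the $10$-arc $10c'\to11c'=v_{j+2}$. Since $v_{j+2}=11c'$ is a $11$-vertex, Observation \ref{twistedge} says it lies on no twisted edge, so its unique $01$-neighbour is the first-bit flip $01c'$; as $v_{j+2}$ is adjacent to $v_1$, this forces $v_1=01c'$. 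But $v_1$ is the tail of the step-$n$ bridge arc, whose bit-string is prescribed by the construction, and matching it against $01c'$ pins $c'$ down precisely so that the twin $c$ of $c'$ makes $v_j=10c$ an endpoint of one of the two twisted edges introduced at step $n$ — contradicting that $e=v_jv_{j+1}$ is a non-twisted matching edge incident to $v_j$. Every configuration thus fails, so no chain twist contains $a_b$.

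I expect this last step — identifying $v_{j+2}$ and the tail of the bridge arc at the level of bit-strings and reading off that $v_j$ then lies on a twisted edge — to be the only delicate part, and the only place where the detailed construction of $\Q_n$ (Lemma \ref{arc_size}, Observations \ref{twistedge} and \ref{newtwist}) is genuinely used; everything before it relies only on Lemma \ref{ctp_lemma} and the fact that $a_b$ is a chain of length one.
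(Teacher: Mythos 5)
Your proposal is correct, and it is actually more complete than the argument the paper itself offers. The paper deduces Corollary \ref{no_bridge} from Lemma \ref{ctp_lemma} in one sentence (``any chain twist path necessarily ends''), but the lemma's stated conclusion --- at most one arc after the return crossing --- does not by itself exclude a cycle that crosses at a non-arc matching edge $e$, uses exactly one arc, and then closes with a non-arc back to the tail of $a_b$: such a configuration satisfies property (\ref{chainprop}) locally. Your proof closes precisely this loophole. The counting step (using that the bridge arc is a chain of length one, so $v_kv_1$ is a non-arc) forces the shape $k=j+2$, and then your bit-string step shows the closing edge requires $v_1=01c'$, hence $c'=\bar0^{n-4}10$, which puts $v_j=10\bar0^{n-4}11$ (and likewise $v_{j+1}=10\bar0^{n-4}10$) on the two step-$n$ twisted edges, so the supposed standard matching edge $e=v_jv_{j+1}$ does not even exist in $\Q_n$. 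The facts you invoke there (a $11$-vertex lies on no twisted edge, so its unique $01$-neighbour is the first-bit flip, Observation \ref{twistedge}; isolated vertices cannot appear in a chain twist, Proposition \ref{ctp_iso}; only $00$-arcs into $v_j$ permit an arc after $e$) are the same ones used inside the paper's proof of Lemma \ref{ctp_lemma}, so your route can be read as re-opening that case analysis at the level of the corollary instead of strengthening the lemma's statement; what it buys is an explicit verification of the step the paper leaves implicit.

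One small point to add for completeness: you should justify that the return crossing can be taken to be an edge of $P$ rather than the closing edge $v_kv_1$. This is immediate, since the bridge edge is itself $v_1$'s unique matching edge (its endpoints are not on twisted edges), so a matching edge at $v_1$ would force $v_k=v_2$; hence $v_k$ lies in the same copy as $v_1$ and the crossing edge occurs among $v_2v_3,\dots,v_{k-1}v_k$, as your argument assumes. With that remark included, the proof is complete.
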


\begin{thm}
\label{ct_thm}
    For $n\geq3$, $(\Q_n,\F_n)$ has no chain twist.
\end{thm}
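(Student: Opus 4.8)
The plan is to proceed by induction on $n$. The base case $n = 3$ is immediate: $\F_3$ is a spanning forest of four dipaths (two chains of length 2 and two isolated vertices), and since it contains no cycles at all, it certainly contains no chain twist; alternatively, one can simply verify this directly against Figure \ref{Q3zfs}. For the induction step, assume $(\Q_{n-1}, \F_{n-1})$ has no chain twist, and suppose for contradiction that $\F_n$ does contain a chain twist $C$.

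The key structural facts assembled in the preceding results do almost all of the work. By Proposition \ref{chainarc}, since $\F_{n-1}$ has no chain twist, any chain twist $C$ in $\F_n$ must contain the bridge arc $a_b$. But Corollary \ref{no_bridge} — which is the immediate consequence of Lemma \ref{ctp_lemma} together with Proposition \ref{ctp_iso} — states precisely that no chain twist in $\Q_n$ can contain the bridge arc. These two statements directly contradict each other, so no such $C$ exists, completing the induction step.

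I would organize the argument so that the one genuinely substantive step is already behind us: the real content is Lemma \ref{ctp_lemma}, whose case analysis (on whether the matching crossing $e$ is a twisted edge or a standard edge, and in the latter case on which of the three arc types -- $00$, $01$, $10$ -- precedes $e$) shows every chain twist path through $a_b$ terminates within one arc of crossing the matching. Given a putative chain twist $C$ containing $a_b$, one traverses $C$ starting from $a_b$; since $C$ is a cycle it must eventually return, hence it must cross the matching at some non-arc edge $e$ (the bridge arc is the only arc across the matching, so $C$ contains exactly one matching crossing that is an arc, namely $a_b$ itself, and at least one other crossing since any cycle using $a_b$ must come back to the other copy of $\Q_{n-1}$ and return). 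Reading $C$ as a chain twist path from $a_b$ up to and past $e$, Lemma \ref{ctp_lemma} bounds what follows $e$ by a single arc, which is far too short to close up into the cycle $C$ while respecting property (\ref{chainprop}) -- in particular the closing-up forces a vertex that is isolated in $\F_n$ or a $11$-vertex to sit in the interior of the twist, which Proposition \ref{ctp_iso} and Observation \ref{no_11_arcs} forbid. This is exactly the reasoning already packaged into Corollary \ref{no_bridge}, so in the write-up the step reduces to citing it.

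The main obstacle, therefore, is not in Theorem \ref{ct_thm} itself but is entirely absorbed into Lemma \ref{ctp_lemma} and the chain of observations (\ref{twistedge}, \ref{newtwist}, \ref{no_11_arcs}) and Lemma \ref{arc_size} feeding it; the theorem's proof is a short induction that simply combines Proposition \ref{chainarc} and Corollary \ref{no_bridge}. One point to be careful about: Proposition \ref{chainarc} is stated for $n \geq 4$, so the induction is anchored at $n = 3$ and the inductive combination of the two results is only invoked for $n \geq 4$, which is exactly where both ingredients apply.
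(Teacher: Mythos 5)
Your proof is correct and follows essentially the same route as the paper: induction on $n$, with the base case $n=3$ verified directly and the inductive step obtained by playing Proposition \ref{chainarc} (any chain twist in $\F_n$ must contain the bridge arc, given that $\F_{n-1}$ has none) against Corollary \ref{no_bridge} (no chain twist can contain the bridge arc). One small caveat: the remark that ``$\F_3$ contains no cycles, hence no chain twist'' is not a valid justification, since a chain twist is a cycle in the graph $\Q_3$ (which has many cycles) satisfying property (\ref{chainprop}) relative to $\F_3$, not a cycle inside $\F_3$ itself; your fallback of checking the base case directly against the explicit arc set, which is what the paper does, is the argument that is actually needed.
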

    
\begin{proof}
    We prove this by induction on $n$. 

    {\bf Base Case:} Let $n=3$. By construction, $(\Q_3,\F_3)$ has no chain twist.

    {\bf Induction Step:} Let $n>3$ and assume, by induction, that $(\Q_{n-1},\F_{n-1})$ does not have a chain twist. 
    
    Assume that $(\Q_n,\F_n)$ contains a chain twist. Then, by Lemma \ref{chainarc}, the chain twist must contain the bridge arc, $a_b$. This contradicts Corollary \ref{no_bridge}. So $(\Q_n,\F_n)$ contains no chain twist, completing the proof.
\end{proof}

We are now able to prove Theorem \ref{zfbound}.

{\it Proof of Theorem \ref{zfbound}.} By Theorem \ref{ct_thm}, $\F_n$ corresponds to a successful zero forcing process on $\Q_n$. By Observation \ref{num_arcs}, this zero forcing set has the desired size. \begin{flushright}
    $\square$
\end{flushright}

\section{Future Work}

In this paper, we defined a new family of twisted hypercube called the minority cube and provided an upper bound on the zero forcing number of these graphs. No lower bound is given, but we conjecture that this is the best possible for this family of graphs.

\begin{conj}
    The zero forcing number of the $n$-dimensional minority cube is $2^{n-1}+2^{n-3}-1$.
\end{conj}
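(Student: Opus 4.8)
The plan is to establish the conjecture by supplying the matching lower bound that complements the upper bound already obtained, working entirely in the arc-set language used throughout. First I would recast the problem: by Equation (\ref{arc_to_zf}), minimizing the size of a zero forcing set is the same as maximizing the size of a forcing arc set, since $Z(\Q_n)=|V|-M_n$ where $M_n:=\max\{|\F|:\F\text{ a forcing arc set of }\Q_n\}$. Lemma \ref{num_arcs} and Theorem \ref{ct_thm} already give $M_n\geq|\F_n|=2^{n-1}+2^{n-3}-1$; note that this arc count is exactly the quantity appearing in the conjecture, and that the associated forcing set has the complementary size $|V|-|\F_n|=2^{n-1}-2^{n-3}+1$. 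The entire content of the conjecture therefore reduces to the extremal statement that $\F_n$ is a \emph{maximum} forcing arc set, i.e.\ $M_n\leq2^{n-1}+2^{n-3}-1$, equivalently that no chain-twist-free arc set of $\Q_n$ contains more than $2^{n-1}+2^{n-3}-1$ arcs.

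There are two natural routes to this bound on $M_n$. The algebraic route follows \cite{minrank}: since $Z(G)$ is bounded below by the maximum nullity over symmetric matrices whose off-diagonal zero/nonzero pattern matches $\Q_n$, it would suffice to exhibit such a matrix of nullity $2^{n-1}-2^{n-3}+1$. I would attempt to perturb the matrix realizing the nullity of $Q_n$ so as to absorb the two twisted edges introduced at each construction step, then track how the nullity changes under these low-rank modifications. The combinatorial route instead bounds $|\F|$ directly, using the chain-twist characterization (Theorem \ref{ctthm}) together with the partition of vertices into $00$-, $10$-, $01$-, and $11$-classes: the goal would be to show that any arc set dense enough to exceed the bound must close into a cycle satisfying property (\ref{chainprop}), hence a chain twist, contradicting Theorem \ref{ctthm}.

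I expect the most promising line to be an inductive extremal argument mirroring the construction itself. The hope is to prove $M_n\leq 2M_{n-1}+1$ with $M_{n-1}=2^{n-2}+2^{n-4}-1$ by induction, the ``$+1$'' accounting for a single arc crossing the matching; this yields exactly $2^{n-1}+2^{n-3}-1$. Carrying this out requires structural converses to the tools already developed: Proposition \ref{chainarc} and Lemma \ref{ctp_lemma} analyze matching-crossing behaviour for our particular $\F_n$, whereas here one must control an \emph{arbitrary} maximum forcing arc set. The key sublemma would assert that some maximum forcing arc set can be chosen to cross the matching at most once, and that deleting the final bit (projecting each copy onto $\Q_{n-1}$) carries a forcing arc set to a forcing arc set while discarding at most the bridge arc.

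The main obstacle is precisely this lower bound. As the paper observes, the standard combinatorial lower bounds (minimum degree, clique cover number) are far from tight, and maximum nullity is NP-hard in general, so no off-the-shelf tool will settle it. Moreover, the twisted edges destroy the vertex-transitivity of $Q_n$ that made its nullity computation clean, so the algebraic route needs a genuinely new matrix adapted to the twists. For the combinatorial route the difficulty is that an adversarial forcing arc set need not respect the two-copies-plus-bridge decomposition: it may route many arcs across the matching in ways the current lemmas do not address, so the crux is bounding the number and interaction of matching-crossing arcs in an optimal arc set.
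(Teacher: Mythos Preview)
The statement you are trying to prove is listed in the paper as an open \emph{conjecture} in the Future Work section; the paper does not give a proof, so there is nothing to compare your argument against. You have correctly identified that the only missing ingredient is the lower bound $Z(\Q_n)\geq 2^{n-1}-2^{n-3}+1$ (equivalently $M_n\leq 2^{n-1}+2^{n-3}-1$), and you rightly observe that the expression in the conjecture as printed is the arc count $|\F_n|$ rather than the zero forcing number; the intended value is the complementary $2^{n-1}-2^{n-3}+1$.

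That said, what you have written is a strategy outline, not a proof, and you candidly flag the gap yourself. The inductive bound $M_n\leq 2M_{n-1}+1$ would indeed close the problem, but the ``key sublemma'' you propose---that some maximum forcing arc set crosses the matching at most once, and that projection to each copy of $\Q_{n-1}$ yields forcing arc sets there---is exactly the open content of the conjecture, and nothing in the paper (nor in your outline) supplies a mechanism for it. An arbitrary forcing arc set may direct many matching edges, including the twisted ones, and the projection argument in Proposition~\ref{cartprod}/\ref{chainarc} works only because in $\F_n$ the matching edges other than the bridge are non-arcs; for a general $\F$ there is no reason the projected walk remains chain-twist-free in $\Q_{n-1}$. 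Likewise the algebraic route via maximum nullity is plausible but you offer no candidate matrix, and the loss of vertex-transitivity you mention is a real obstruction. In short: your reduction is correct, your two proposed routes are the natural ones, but neither is carried far enough to constitute a proof, which is consistent with the paper leaving this as a conjecture.
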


We were able to achieve a zero forcing set of this size by performing only one twist at each step of the construction. We conjecture that this is the lowest zero forcing number that can be achieved with a single twist.

\begin{conj}
    Given a family of twisted hypercubes, where at each step in the construction only one twist is performed, the zero forcing number is at least that of the minority cube of the same dimension. 
\end{conj}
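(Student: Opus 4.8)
We sketch a plan of attack. The plan is to prove the stronger, more uniform statement that \emph{every} $n$-dimensional twisted hypercube $G_n$ built with a single twist at each step satisfies $Z(G_n)\ge 2^{n-1}-2^{n-3}+1$. Since the minority cube $\Q_n$ is itself obtained this way, this lower bound together with Theorem \ref{zfbound} would force $Z(\Q_n)=2^{n-1}-2^{n-3}+1$, settling both conjectures above simultaneously. By Equation \ref{arc_to_zf} and Theorem \ref{ctthm}, the target is equivalent to the purely combinatorial claim that every chain-twist-free arc set $\F$ of $G_n$ has $|\F|\le 2^{n-1}+2^{n-3}-1$; by Lemma \ref{num_arcs} this is exactly $|\F_n|$, so one is asserting that the minority-cube construction is extremal within the one-twist families.

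First we would set up an induction on $n$. Decompose $G_n=G^0\sqcup G^1\sqcup M$, where $G^0,G^1$ are copies of one-twist $(n-1)$-dimensional twisted hypercubes and the connecting matching $M$ contains exactly one twisted edge. Given a chain-twist-free $\F$ on $G_n$, each restriction $\F^i=\F\cap(V(G^i)\times V(G^i))$ is again chain-twist-free, since a chain twist lying inside $G^i$ is literally a chain twist of $\F$; hence by induction $|\F^i|\le 2^{n-2}+2^{n-4}-1$. Writing $m=|\F\cap M|$ we get $|\F|\le 2^{n-1}+2^{n-3}-2+m$, so it would suffice to know $m\le 1$. This is where the argument hits its first wall: $m\le 1$ is false in general, since orienting \emph{every} matching edge from $G^0$ to $G^1$ (and taking no other arcs) is chain-twist-free with $m=2^{n-1}$ --- a cycle through it would, immediately after entering $G^1$ along a non-arc matching edge, be forced to traverse a second non-arc to leave again. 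The genuine phenomenon is a trade-off: an arc $w_0\to w_1\in M$ uses up the unique in-arc of $w_1$, so $w_1$ is a chain start of $\F^1$; thus the number of chains of $\F^1$, namely $|V(G^1)|-|\F^1|$, is at least the number of matching arcs directed into $G^1$, and symmetrically for $G^0$.

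The fix should therefore be to carry the induction with a strengthened hypothesis: bound $|\F'|$ for a chain-twist-free arc set of a one-twist $(n-1)$-dimensional twisted hypercube \emph{in terms of a prescribed set $T$ of vertices required to be chain starts and the location of that cube's own twisted edge}, with a penalty that grows in $|T|$; then one would split a hypothetical over-large $\F$ so that the penalties in the two copies exactly absorb the matching contribution $m$. \textbf{The main obstacle is making this quantitative.} One must understand precisely which mixtures of in-copy arcs and matching arcs avoid creating a chain twist in $G_n$ --- in particular whether a vertex forced to be a chain start by $M$ could already have been a ``free'' chain start of an optimal arc set of $G^i$, which is exactly where the position of $T$, and hence of the twisted edge, enters --- and here the single twisted edge, incident only to the analogues of $10$- and $01$-vertices by Observation \ref{twistedge}, destroys the symmetry that makes the ordinary hypercube tractable. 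As an alternative route that bypasses arc sets, one could try a fort-cover argument: exhibit in every one-twist $G_n$ a collection of forts, or a fractional fort cover, of total weight $2^{n-1}-2^{n-3}+1$, modelled on the partition of vertices into $00$-, $10$-, $01$- and $11$-vertices that underlies $\F_n$. Either way, the essential new ingredient beyond this paper is a lower-bound mechanism for the zero forcing number of twisted hypercubes, since the maximum-nullity bound that settles $Q_n$ in \cite{minrank} relies on symmetry and does not survive the twisting.
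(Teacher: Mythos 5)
This statement is one of the paper's open conjectures (stated in the Future Work section); the paper itself offers no proof of it, so there is nothing for your argument to match --- it can only be judged on whether it closes the problem, and it does not. What you have written is a research plan whose decisive step is missing, and you say so yourself. The induction you set up is sound as far as it goes: restricting a chain-twist-free arc set $\F$ to the two $(n-1)$-dimensional halves does yield chain-twist-free arc sets, and the count $|\F|\le 2^{n-1}+2^{n-3}-2+m$ follows; your observation that $m\le 1$ fails (all matching edges oriented into one copy, no other arcs) is also correct, and it correctly identifies the trade-off between matching arcs and forced chain starts. But the proposed repair --- an induction hypothesis quantifying how a prescribed set $T$ of forced chain starts, together with the location of the twisted edge, reduces the maximum size of a chain-twist-free arc set --- is never formulated, and no mechanism is offered for proving it. That quantitative statement \emph{is} the conjecture; everything before it is bookkeeping. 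The alternative fort-cover route is likewise named but not constructed. So there is a genuine gap, and it is exactly the one the paper flags: no lower-bound technique for the zero forcing number of twisted hypercubes is currently known, since the maximum-nullity argument used for $Q_n$ in \cite{minrank} does not survive the twisting.

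Two smaller points. First, your ``purely combinatorial'' reformulation via Equation \ref{arc_to_zf} and Theorem \ref{ctthm} needs the standing convention (implicit in the paper, since $\F$ is a spanning forest of dipaths) that arc sets have in- and out-degree at most one at every vertex; without it, Definition \ref{arc_set} admits chain-twist-free arc sets that do not correspond to any forcing process, and the bound $|\F|\le 2^{n-1}+2^{n-3}-1$ is not equivalent to the zero forcing statement. Your later step ``an arc $w_0\to w_1$ uses up the unique in-arc of $w_1$'' already relies on this convention, so state it. Second, be aware that you are proving a strictly stronger statement than the one posed: the conjecture asks only that one-twist families do no better than $\Q_n$, whereas your uniform bound $Z(G_n)\ge 2^{n-1}-2^{n-3}+1$ additionally entails the paper's other conjecture that $\F_n$ is optimal for $\Q_n$. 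That is a reasonable strategy, but it means your plan cannot succeed unless both conjectures hold, and it forfeits any approach that compares an arbitrary one-twist family to $\Q_n$ directly without first pinning down $Z(\Q_n)$.
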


We do not know whether performing multiple twists at each step will improve the efficiency of the zero forcing process, so we leave this as an open question.

\begin{ques}
    Is there another family of twisted hypercubes with smaller zero forcing number than the minority cube?
\end{ques}

\bibliographystyle{plain}
\bibliography{refs}{}

\end{document}